\DeclareMathAlphabet{\mathpzc}{OT1}{pzc}{m}{it}
\numberwithin{equation}{section}
\theoremstyle{plain}
\newtheorem*{maintheorem*}{Main Theorem}
\newtheorem*{thm*}{Theorem}
\newtheorem*{thma*}{Theorem A}
\newtheorem*{thmaa*}{Theorem A'}
\newtheorem*{thmb*}{Theorem B}
\newtheorem*{thmo*}{Theorem 1.1}
\newtheorem*{thmc*}{Theorem C}
\newtheorem*{thmd*}{Theorem D}
\newtheorem*{thmf*}{Theorem 4.1}
\newtheorem*{remark*}{Remark}
\newtheorem*{conjecture*}{Conjecture}
\newtheorem*{prop*}{Proposition}
\newtheorem*{lem*}{Basic Lemma}
\newtheorem{thm}{Theorem}[section]
\newtheorem{lem}[thm]{Lemma}
\newtheorem{prop}[thm]{Proposition}
\theoremstyle{definition}
\newtheorem*{proofc*}{Proof of Theorem C}
\def\bbr{\mathbb{R}}
\def\tbf{{\mathbf{t}}}
\def\SL{{\rm{SL}}}
\def\PSL{{\rm PSL}}
\def\supp{{\rm supp}}
\def\t{{\tbf}}
\newcommand{\PS}{\operatorname{PS}}
\newcommand{\BR}{\operatorname{BR}}
\newcommand{\BMS}{\operatorname{BMS}}
\newcommand{\Haar}{\operatorname{Haar}}
\newcommand{\be}{\begin{equation}}
\newcommand{\ee}{\end{equation}}
\def\G{\Gamma}
\def\ba{\backslash}
\def\ep{\epsilon}
\def\br{\mathbb{R}}
\def\field{\mathbb{F}}
\newcommand{\bH}{\mathbb H}
\renewcommand{\c}{\mathbb{C}}
\newcommand{\op}{\operatorname}
\newcommand{\brg}{m^{\BR}_{\Gamma_2}}
\newcommand{{\cont}}{{\check U}}
\newcommand\gf{\PSL_2(\field)}
\newcommand\dN{\Delta(N)}
\begin{document}
\title[]
{ Invariant Radon measures for Unipotent flows and products of  Kleinian groups}

\author{Amir Mohammadi}
\address{Department of Mathematics, The University of Texas at Austin, Austin, TX 78750}
\email{amir@math.utexas.edu}

\thanks{Mohammadi was supported in part by NSF Grants \#1200388, \#1500677 and \#1128155, and Alfred P.~Sloan Research Fellowship.}

\author{Hee Oh}
\address{Mathematics department, Yale university, New Haven, CT 06511
and Korea Institute for Advanced Study, Seoul, Korea}
\email{hee.oh@yale.edu}

\thanks{Oh was supported by in parts by NSF Grant \#1361673.}

\subjclass[2010] {Primary 11N45, 37F35, 22E40; Secondary 37A17, 20F67}

\keywords{Geometrically finite groups, Measure classification, Radon measures,
 Burger-Roblin measure}




\begin{abstract} Let $G= \gf$ where $\field= \br , \c$, and
 consider the space $Z=(\Gamma_1 \times \Gamma_2)\ba (G\times G)$ where $\Gamma_1<G$ is  a co-compact lattice and $\G_2<G$ is a finitely generated discrete 
  Zariski dense subgroup. The work of Benoist-Quint  \cite{BQ} gives a classification of 
all ergodic  invariant Radon measures on $Z$ for the diagonal $G$-action. In this paper,
   for a horospherical subgroup $N$ of $G$, we
 classify all ergodic, conservative,  invariant Radon measures on $Z$
  for the diagonal $N$-action, under the additional assumption that $\Gamma_2$ is geometrically finite. 
  \end{abstract}

\maketitle

\section{Introduction}

The celebrated theorem of M. Ratner in 1992 classifies all {\it finite}  invariant measures for
  unipotent flows on the quotient space of a connected Lie group by its discrete subgroup \cite{Ra}.
  The problem of classifying invariant {\it locally finite}  Borel measures (i.e., Radon measures)
is far from being understood in general. Most of known classification results are restricted
to the class of horopsperical invariant measures on a quotient of a simple Lie group of rank one
(\cite{Bu,Roblin, Wi}, \cite{Bab,Led,LedSa,Sar}). In this article, we obtain a classification of Radon measures
invariant under unipotent flow in one of the most basic examples
of the quotient of  a higher rank semisimple Lie group by a discrete subgroup of infinite co-volume.
\medskip


Let $G= \gf$ where $\field$ is either $\br $ or $ \c$. Let $\G_1$ and $\G_2$ be
finitely generated Zariski dense, discrete subgroups of $G$. Set
$$Z:=(\Gamma_1\times \Gamma_2) \ba (G\times G)=X_1\times X_2$$
where  $X_i=\Gamma_i\ba G$ for $i=1,2$.
For $S\subset G$, $\Delta(S):=\{(s,s):s\in S\}$ denotes the diagonal embedding of $S$ into $G\times G$.

\begin{thm}[\cite{BQ}, Benoist-Quint] Assume that $\G_1<G$ is co-compact. Then
 any ergodic $\Delta(G)$-invariant Radon measure $\mu$
on $Z$ is, up to a constant multiple, one of the following:
\begin{itemize}
\item  $\mu$ is the product $m^{\Haar}\times m^{\Haar}$ of Haar measures;
  \item $\mu$ is the graph of the Haar measure, 
  in the sense that
   for some $g_0\in G$ with $[\Gamma_2: g_0^{-1}\Gamma_1 g_0\cap \Gamma_2]<\infty$,
$\mu=\iota_*m^{\Haar}_{(g_0^{-1}\Gamma_1 g_0\cap \Gamma_2)}$, 
i.e., the push-forward of the $\Haar$-measure on $(g_0^{-1}\Gamma_1 g_0\cap \Gamma_2)\ba G$
to the closed orbit $[(g_0, e)]\Delta (G)$ via the isomorphism $\iota$ given by $[g]\mapsto [(g_0g, g)]$.
\end{itemize}
\end{thm}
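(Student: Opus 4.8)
The plan is to realize $\mu$ as a joining of the two homogeneous $G$-actions on $X_1$ and $X_2$ and, by disintegration, to transfer the problem to a measure-classification question for the $\Gamma_2$-action on the \emph{compact} homogeneous space $G/\Gamma_1$, where the hypotheses on $\Gamma_2$ can be exploited.

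First I would record the marginals and disintegrate. The $G$-action on $X_1=\Gamma_1\ba G$ by right translation is transitive, so up to scaling the only $\Delta(G)$-invariant (hence $G$-invariant) Radon measure on $X_1$ is the Haar probability measure $m^{\Haar}_{X_1}$; thus $(\pi_1)_*\mu=c\,m^{\Haar}_{X_1}$ for the first projection $\pi_1\colon Z\to X_1$, and after rescaling I take $c=1$. I then disintegrate $\mu=\int_{X_1}\mu_{x_1}\,dm^{\Haar}_{X_1}(x_1)$, the conditionals $\mu_{x_1}$ being Radon measures on the fibers $\pi_1^{-1}(x_1)\cong X_2$. Since $m^{\Haar}_{X_1}$ is $G$-invariant and $\mu$ is $\Delta(G)$-invariant, the conditionals are equivariant, $\mu_{x_1 h}=h_*\mu_{x_1}$ for all $h\in G$ and a.e.\ $x_1$. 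Transitivity of $G$ on $X_1$ lets me fix $x_1^0=\Gamma_1 e$ and recover the whole family from $\lambda_0:=\mu_{x_1^0}$, while the relations with $h\in\operatorname{Stab}_G(x_1^0)=\Gamma_1$ force $\lambda_0$ to be invariant under the right $\Gamma_1$-action on $X_2=\Gamma_2\ba G$.

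Next I would unfold this invariance: a right-$\Gamma_1$-invariant Radon measure on $\Gamma_2\ba G$ is the same datum as a left-$\Gamma_2$-invariant, right-$\Gamma_1$-invariant Radon measure on $G$, which descends to a $\Gamma_2$-invariant Radon — hence \emph{finite} — measure $\lambda$ on the \emph{compact} space $G/\Gamma_1$. This gives an affine bijection between $\Delta(G)$-invariant Radon measures on $Z$ (normalized on $X_1$) and $\Gamma_2$-invariant probability measures on $G/\Gamma_1$, under which ergodic measures correspond to ergodic measures, so I may assume $\lambda$ is $\Gamma_2$-ergodic. The crux is then to classify $\Gamma_2$-ergodic probability measures on $G/\Gamma_1$. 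Choosing a probability measure on $\Gamma_2$ whose support generates the Zariski-dense group $\Gamma_2$, the measure $\lambda$ is stationary for the induced random walk, and the Benoist--Quint stationary measure theorem for Zariski-dense subgroups acting on $G/\Gamma_1$ gives that $\lambda$ is homogeneous: either the Haar measure, or the uniform measure on a finite $\Gamma_2$-orbit. I expect this dichotomy to be the main obstacle: it rests on the exponential-drift argument producing invariance under a one-parameter unipotent subgroup, after which Ratner-type rigidity together with Zariski density forces the Haar measure unless the orbit is finite.

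Finally I would translate the two alternatives back through the bijection. If $\lambda$ is Haar on $G/\Gamma_1$, then $\lambda_0=m^{\Haar}_{X_2}$, and $G$-invariance of Haar gives $\mu_{x_1}\equiv m^{\Haar}_{X_2}$, whence $\mu=m^{\Haar}\times m^{\Haar}$. If instead $\lambda$ is uniform on a finite orbit $\Gamma_2\cdot g_1\Gamma_1$, then setting $g_0=g_1^{-1}$ the stabilizer $\Gamma_2\cap g_1\Gamma_1 g_1^{-1}=g_0^{-1}\Gamma_1 g_0\cap\Gamma_2$ has finite index in $\Gamma_2$; finiteness of the orbit is exactly closedness of the orbit $[(g_0,e)]\Delta(G)$, and unwinding the disintegration identifies $\mu$ with the push-forward $\iota_* m^{\Haar}_{(g_0^{-1}\Gamma_1 g_0\cap\Gamma_2)}$ of the statement. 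Checking that these two candidates are genuinely $\Delta(G)$-invariant and ergodic, and that the index bookkeeping matches the claimed normalization, completes the argument.
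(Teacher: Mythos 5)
Your overall strategy is the same as the paper's: reduce the classification of $\Delta(G)$-invariant ergodic Radon measures on $Z$ to the classification of $\Gamma_2$-invariant ergodic \emph{probability} measures on the compact homogeneous space attached to $\Gamma_1$, and then quote Benoist--Quint (Haar measure or finite orbit). The paper implements this by exhibiting the correspondence $\nu\mapsto\tilde\nu$ with $\tilde\nu(f)=\int_{X_2}\int_{X_1}f(\Gamma_1 hg,\Gamma_2 g)\,d\nu(h)\,dm^{\Haar}(g)$, i.e.\ it disintegrates $\mu$ over the \emph{non-compact} factor $X_2$: since $X_1$ is compact, $\mu(X_1\times K)<\infty$ for every compact $K\subset X_2$, so the $X_2$-marginal of $\mu$ is a genuine $G$-invariant Radon measure on $X_2$, hence a multiple of Haar, and the conditionals are probability measures on the compact fibers $\cong X_1$ which are right-$\Gamma_2$-equivariant --- exactly the measure $\nu$ to which the cited Benoist--Quint theorem applies.

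The gap in your version is in the very first step. You push $\mu$ forward to the compact factor $X_1$ and assert $(\pi_1)_*\mu=c\,m^{\Haar}_{X_1}$ with $c$ normalizable to $1$. But $(\pi_1)_*\mu(U)=\mu(U\times X_2)$, and since $X_2$ is non-compact this can equal $+\infty$ for every nonempty open $U$; this is precisely what happens for $\mu=m^{\Haar}\times m^{\Haar}$ when $\Gamma_2$ has infinite covolume, i.e.\ for one of the two measures you are trying to classify. In that case $(\pi_1)_*\mu$ is not a Radon measure, there is no finite constant $c$, and a disintegration over $m^{\Haar}_{X_1}$ with exactly equivariant Radon conditionals $\mu_{x_1}$ on $X_2$ is something that must be constructed (e.g.\ by passing to an equivalent probability measure and then trivializing the resulting scaling cocycle), not something the disintegration theorem hands you. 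The repair is simply to disintegrate in the other direction, over $X_2$, as the paper does; after that flip, your identification of the conditionals with a $\Gamma_2$-invariant probability measure on the compact quotient, the appeal to Benoist--Quint, and your translation of the finite-orbit case into the graph measure $\iota_*m^{\Haar}_{(g_0^{-1}\Gamma_1 g_0\cap\Gamma_2)}$ all agree with the paper's argument.
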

Indeed,  it is proved in \cite{BQ} that any ergodic $\G_2$-invariant Borel probability
measure on $X_1$ is either a Haar measure or supported on a finite orbit of $\G_2$. This result is equivalent to the above theorem,
 in view of the homeomorphism $\nu\mapsto \tilde \nu$
  between the space of all $\G_2$-invariant measures on $X_1$ and the space of all
 $\Delta(G)$-invariant measures on $Z$, given by
  $$\tilde \nu (f)=\int_{X_2}\int_{X_1} f(\Gamma_1 hg, \Gamma_2 g) \operatorname{d}\!\nu(h)\operatorname{d}\!m^{\Haar}(g). $$
Since the Haar measure $m^{\Haar}$ on $X_1$ is ergodic for any element of $G$ which generates an unbounded
subgroup, it follows that $m^{\Haar}$ is $\G_2$-ergodic and hence
the product $m^{\Haar}\times m^{\Haar}$  of the Haar measures in $X_1\times X_2$ is $\Delta(G)$-ergodic.

We now consider the action of $\Delta(N)$ on $Z$ where $N$ is a horospherical  subgroup of $G$, i.e., $N$ is conjugate to the subgroup  
$$\left\{\begin{pmatrix} 1 & 0\\t & 1\end{pmatrix}: t\in \field\right\}.$$

A $\dN$-invariant Radon measure on $Z$ is said to be {\it conservative}
if for any subset $S$ of positive measure in $Z$, the measure of $\{n\in N:
xn\in S\}$, with respect to the Haar measure of $N$, is infinite for almost all $x \in S$.

\medskip

The aim of this paper is to classify all $\dN$-invariant ergodic conservative Radon measures on $Z$ assuming $\G_2$ is geometrically {finite}.
Since Ratner~\cite{Ra} classified all  such finite measures, our focus lies on {\it infinite} Radon measures.


Note that if $\mu$ is a $\dN$-invariant measure, then the translate {$w_* \mu$}
is also $\dN$-invariant  for any $w$ in the centralizer of $\dN$. 
The centralizer of $\dN$ in {$G\times G$} is equal to $N\times N$. Hence it suffices to classify
$\dN$-invariant measures, up to a translation by an element of $N\times N$.

Let  $m^{\BR}_{\Gamma_2}$  denote the $N$-invariant Burger-Roblin measure on $X_2$.
It is known that
$m^{\BR}_{\Gamma_2}$ is the unique $N$-invariant ergodic conservative measure on $X_2$, 
which is not supported on a closed $N$-orbit (\cite{Bu}, \cite{Roblin}, \cite{Wi}).
When $\G_2$ is of infinite co-volume, $\brg$ is an infinite measure.

In the following two theorems, which are main results of this paper,
 we assume that $\Gamma_1<G$ is cocompact  and
$\G_2$ is a Zariski dense, geometrically finite subgroup of $G$ with infinite co-volume. 
\begin{thm}\label{ergg}
The product measure $m^{\Haar}\times m^{\BR}_{\Gamma_2}$ on $Z$  is a
 $\dN$-ergodic conservative infinite Radon measure.
\end{thm}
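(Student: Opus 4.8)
The plan is to dispatch invariance and infiniteness immediately, isolate ergodicity as the heart of the matter, and then recover conservativity from ergodicity. Invariance of $\mu:=m^{\Haar}\times\brg$ under $\dN$ is clear, since $m^{\Haar}$ is right $G$-invariant (hence $N$-invariant) and $\brg$ is $N$-invariant by construction. Infiniteness is equally clear: $m^{\Haar}$ is finite while $\brg$ is infinite because $\G_2$ has infinite co-volume. For ergodicity I would take a $\dN$-invariant measurable set $E\subset Z$ with $\mu(E)>0$ and aim to show $\mu(Z\setminus E)=0$. Writing the fibers $E_{x_2}=\{x_1\in X_1:(x_1,x_2)\in E\}$, the $\dN$-invariance of $E$ is equivalent to the relation $E_{x_2 n}=E_{x_2}n$ for $n\in N$. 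Hence (normalizing $m^{\Haar}$ so that $m^{\Haar}(X_1)=1$) the function $g(x_2):=m^{\Haar}(E_{x_2})$ is $N$-invariant on $X_2$, so by ergodicity of $\brg$ it equals a constant $c$ for $\brg$-a.e.\ $x_2$; since $\mu(E)=c\cdot\brg(X_2)>0$ we get $c>0$, and everything reduces to proving $c=1$.

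The key device is the measurable map $\psi\colon X_2\to L^2(X_1)$, $\psi(x_2)=\mathbf{1}_{E_{x_2}}$, which by the fiber relation is equivariant for the Koopman representation $U$ of $N$ on $L^2(X_1)$, namely $\psi(x_2 n)=U_n^{-1}\psi(x_2)$. Decomposing $L^2(X_1)=\mathbb{C}\cdot\mathbf{1}\oplus L^2_0(X_1)$ into constants and mean-zero functions, the constant component of $\psi(x_2)$ is $c\,\mathbf{1}$, so the mean-zero component $\psi_0(x_2)$ has constant norm $\|\psi_0(x_2)\|^2=\|\psi(x_2)\|^2-c^2=c-c^2=c(1-c)$. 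It therefore suffices to show $\psi_0\equiv 0$, which forces $c\in\{0,1\}$ and hence $c=1$.

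To kill $\psi_0$, fix $v\in L^2_0(X_1)$ and set $F_v(x_2)=\langle\psi_0(x_2),v\rangle$, a bounded function on $X_2$. Equivariance gives $F_v(x_2 n)=\langle\psi_0(x_2),U_n v\rangle$. Since $\G_1$ is cocompact, the representation of $G$ on $L^2_0(X_1)$ has no invariant vectors, so by the Howe-Moore theorem $U_n v\rightharpoonup 0$ weakly as $n\to\infty$ in $N$; hence $F_v(x_2 n)\to 0$ for $\brg$-a.e.\ $x_2$. Because the $N$-action on $(X_2,\brg)$ is conservative and ergodic, a bounded function whose orbital values tend to $0$ must vanish: if $\{|F_v|>\epsilon\}$ had positive measure, Poincar\'e recurrence for the conservative action would force $x_2 n$ to return to it for arbitrarily large $n$, contradicting $F_v(x_2 n)\to 0$. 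Thus $F_v=0$ for $\brg$-a.e.\ $x_2$, and ranging $v$ over a countable dense subset of $L^2_0(X_1)$ yields $\psi_0\equiv 0$, so $c=1$ and $E_{x_2}=X_1$ for a.e.\ $x_2$; this proves ergodicity.

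Finally, conservativity would follow from ergodicity via Hopf's criterion. Choosing $f=f_1\otimes f_2\in L^1(\mu)$ with $f_1,f_2>0$ and setting $\Phi(z)=\int_N f(z\,\Delta(n))\,dn$, invariance of Haar measure on $N$ makes $\Phi$ a $\dN$-invariant function, hence $\mu$-a.e.\ equal to a constant $c'\in[0,\infty]$. If $c'$ were finite then $\int_{X_1}\Phi(x_1,x_2)\,dm^{\Haar}(x_1)=c'$ for $\brg$-a.e.\ $x_2$; but Tonelli's theorem together with the $N$-invariance of $m^{\Haar}$ gives $\int_{X_1}\Phi(x_1,x_2)\,dm^{\Haar}(x_1)=\big(\int_{X_1}f_1\,dm^{\Haar}\big)\int_N f_2(x_2 n)\,dn$, which is infinite for $\brg$-a.e.\ $x_2$ by conservativity of $\brg$ on $X_2$. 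This contradiction forces $c'=\infty$, i.e.\ $\Phi=\infty$ $\mu$-a.e., which is exactly conservativity of $\mu$. The main obstacle is the ergodicity step: ergodicity of the two factors alone does not give ergodicity of the diagonal action on an \emph{infinite-measure} product, and the argument genuinely requires the Howe-Moore decay (mixing) of the horocycle flow on the compact factor $X_1$ coupled with the conservativity of the Burger-Roblin measure on $X_2$.
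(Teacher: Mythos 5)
Your proof is correct, but it follows a genuinely different route from the paper. The paper obtains Theorem \ref{ergg} as a corollary of the full classification (Theorem \ref{main}): assuming non-ergodicity, it inspects the ergodic decomposition of $m^{\Haar}\times\brg$, shows that the supports of all possible ergodic conservative components of type (2) and (3) form a $\mu$-null set (countably many closed $\Delta(G)$-orbits, finitely many families of closed $N$-orbits), concludes that the measure would have to be completely dissipative, and derives a contradiction from the compactness of $X_1$ and the conservativity of $\brg$. You instead give a direct, self-contained proof: this is the classical ``mixing $\times$ conservative ergodic $\Rightarrow$ ergodic'' argument from infinite ergodic theory, implemented via the equivariant map $x_2\mapsto \mathbf{1}_{E_{x_2}}\in L^2(X_1)$, the Howe--Moore decay of matrix coefficients on $L^2_0(X_1)$ (which applies since $\Gamma_1$ is a lattice, so $L^2_0(X_1)$ has no $G$-invariant vectors), and the unboundedness of return times guaranteed by conservativity of $\brg$; conservativity of the product then follows from Hopf's criterion. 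Both arguments ultimately rest on the known $N$-ergodicity and $N$-conservativity of $\brg$ (Burger, Roblin, Winter). What each buys: the paper's deduction is a few lines once Theorem \ref{main} is available, but it is not independent of the classification machinery; your argument is logically independent of Theorem \ref{main}, is more robust (it proves ergodicity of $m^{\Haar}\times\nu$ for \emph{any} conservative ergodic $N$-invariant Radon measure $\nu$ on $X_2$, with no geometric finiteness needed beyond what ensures conservativity and ergodicity of $\nu$), and makes transparent that the real mechanism is mixing on the compact factor coupled with recurrence on the infinite factor. The only points requiring care in your write-up --- the ``for a.e.\ $n$'' versus ``for all $n$'' issue in the equivariance identity, and the use of the Hopf decomposition for $\br^d$-actions --- are standard and handled correctly.
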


\begin{thm}\label{main}   Any
 $\dN$-invariant, ergodic,  conservative, infinite Radon measure $\mu$ on $Z$ 
 is one of the following, up to a translation by an element of $N\times N$:
\begin{enumerate}
\item $\mu$ is the product measure $m^{\Haar}\times m^{\BR}_{\Gamma_2};$
  \item $\mu$ is the graph of the $\BR$-measure, in the sense that
   for some $g_0\in \gf$ with $[\Gamma_2: g_0^{-1}\Gamma_1 g_0\cap \Gamma_2]<\infty$,
\[
 \mu=\phi_*m^{\BR}_{(g_0^{-1}\Gamma_1 g_0\cap \Gamma_2)},
\]
i.e., the push-forward of the $\BR$-measure on $(g_0^{-1}\Gamma_1 g_0\cap \Gamma_2)\ba G$
to the closed orbit $[(g_0, e)]\Delta (G)$ via the isomorphism $\phi$ given by $[g]\mapsto [(g_0g, g)]$.

\item $\field=\c$ and there exists a closed  orbit $x_2N$ in $ X_2$ homeomorphic to $\br \times \mathbb S^1$
such that $\mu$ is supported on $X_1\times x_2N$. To describe $\mu$ more precisely,
let $U< N$ denote the one dimensional subgroup containing $\op{Stab}_{N}(x_2)$
and $\operatorname{d}\!n$ the $N$-invariant measure on $x_2N$ in $X_2$.
We then have one of the two possibilities:
\begin{enumerate} \item
 $$\mu= m^{\Haar}\times \operatorname{d}\!n;$$ 
\item  there exist a 
connected subgroup $L\simeq\SL_2(\bbr)$ with $L\cap N=U$,
 a compact $L$ orbit $Y$ in $ X_1$ and an element $n\in N$ 
such  that
$$\mu=\int_{x_2N} \mu_{x}\operatorname{d}\!x$$ where $\mu_{x_2n_0}$ is given by
$\mu_{x_2n_0}(\psi )=\int_{Y} \psi (yn n_0, x_2n_0)\operatorname{d}\!y $ with
$\operatorname{d}\!y$ being the $L$-invariant probability measure on $Y$.
\end{enumerate}\end{enumerate}
 \end{thm}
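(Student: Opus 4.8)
The plan is to study $\mu$ through its pushforward to the second factor and a fiberwise disintegration, working throughout up to the $N\times N$-translation already allowed by the statement (recall $N\times N$ is the centralizer of $\dN$). Write $\pi_2\colon Z=X_1\times X_2\to X_2$ for the projection. Since $X_1$ is compact, $\pi_2$ is proper, so $\bar\mu:=(\pi_2)_*\mu$ is a Radon measure on $X_2$; and since $\pi_2$ intertwines the $\dN$-action with the right $N$-action on $X_2$, the measure $\bar\mu$ is $N$-invariant. Ergodicity passes to this factor, and conservativity passes as well because the $N$-return set of $\bar\mu$ contains the $\dN$-return set of $\mu$ and is therefore infinite. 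Hence $\bar\mu$ is an $N$-ergodic, conservative Radon measure, so by the Burger--Roblin uniqueness quoted above it is, up to a constant, either (A) the measure $m^{\BR}_{\Gamma_2}$, or (B) supported on a single closed $N$-orbit $x_2N$. Since $\mu$ is infinite while $X_1$ is compact, in case (B) the orbit $x_2N$ cannot be compact, which forces $\field=\c$ and $\op{Stab}_N(x_2)\cong\mathbb Z$, i.e.\ the cylinder $\br\times\mathbb S^1$; this is why case (3) occurs only for $\field=\c$.

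\textbf{Case (A): disintegration and the atomic alternative.} Here I disintegrate $\mu=\int_{X_2}\mu_{x_2}\,dm^{\BR}_{\Gamma_2}(x_2)$ into probability measures $\mu_{x_2}$ on the compact space $X_1$, and push the $\dN$-invariance of $\mu$ through the (essentially unique) disintegration to obtain the equivariance $\mu_{x_2n}=n_*\mu_{x_2}$ for $m^{\BR}$-a.e.\ $x_2$ and all $n\in N$, where $n_*$ is the right $N$-translation on $X_1$. The total atomic mass and the maximal atom size of $\mu_{x_2}$ are $N$-invariant functions of $x_2$, hence constant by ergodicity, so either $m^{\BR}$-almost every fiber is non-atomic, or almost every fiber is purely atomic. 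In the atomic case, constancy of the number of maximal atoms together with ergodicity produces a measurable $N$-equivariant section $\sigma\colon X_2\to X_1$ with $\sigma(x_2n)=\sigma(x_2)n$. Writing $\sigma$ through a lift $\beta\colon G\to G$, the relation $\beta(gn)=\beta(g)n$ shows $\beta(g)g^{-1}$ is right $N$-invariant; feeding this into the $\Gamma$-equivariance and invoking Zariski density of $\Gamma_2$ forces $\beta(g)g^{-1}$ to be a constant $g_0$, so the graph of $\sigma$ is a single closed $\Delta(G)$-orbit with $[\Gamma_2:g_0^{-1}\Gamma_1g_0\cap\Gamma_2]<\infty$, and $\mu$ is the graph of the $\BR$-measure as in alternative (2).

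\textbf{Case (A): the non-atomic alternative (the crux).} The hardest step is to show that non-atomic fibers are forced to equal $m^{\Haar}$, which then identifies $\mu$ with the product $m^{\Haar}\times m^{\BR}_{\Gamma_2}$ of alternative (1). Orbit averaging via unique ergodicity of the horocycle flow on $X_1$ cannot distinguish this case from the graph case, so the distinction must come from polynomial divergence of the unipotent flow. Non-atomicity lets me pick $\mu$-generic points $p=(x_1,x_2)$ and $p'=(x_1h,x_2)$ sharing the same $X_2$-coordinate, with $h\neq e$ arbitrarily small along a suitable sequence. Under $\dN$ their second coordinates stay equal to $x_2n$ while their first coordinates differ by $n^{-1}hn$; choosing return times $n_k\to\infty$ provided by conservativity of $m^{\BR}_{\Gamma_2}$ (so that $x_2n_k$ returns near $x_2$) and arranging $n_k^{-1}h n_k\to w\neq e$, the limits of the two generic orbits show $\mu$ is invariant under $(w,e)\in G\times\{e\}$. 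Iterating, and analysing which $w$ arise as $\Ad(n^{-1})$ sweeps $h$ across the $\bar N$, $A$ and $N$ directions, enlarges the first-factor stabiliser of the fiber measures to a nontrivial closed subgroup; a homogeneity argument on the compact quotient $X_1$ then forces it to be all of $G$, so $\mu_{x_2}=m^{\Haar}$. \emph{Making this drift rigorous is the main obstacle}: one must select $h$ in a direction whose conjugates converge, guarantee the limit $w$ is nontrivial, and control the recurrence quantitatively in an infinite, merely conservative measure.

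\textbf{Case (B): reduction to Ratner's theorem.} When $\field=\c$ and $\bar\mu$ lives on a cylinder $x_2N\cong\br\times\mathbb S^1$, the conservative recurrence of $\mu$ takes place entirely along the one-parameter subgroup $U<N$ with $U/\op{Stab}_N(x_2)\cong\mathbb S^1$. Disintegrating $\mu$ over $x_2N$ and using this circle to promote the stabiliser invariance of the fibers to invariance under the full one-parameter unipotent flow $U$, the problem reduces to classifying $U$-invariant ergodic probability measures on the compact homogeneous space $X_1=\Gamma_1\ba G$. By Ratner's theorem these are algebraic; since $\Gamma_1$ is cocompact it has no nontrivial unipotents, so $X_1$ has no periodic $U$-orbits, and the only possibilities are $m^{\Haar}$ (alternative (3a)) and the invariant measure on a compact orbit of a subgroup $L\cong\SL_2(\br)$ with $L\cap N=U$ (alternative (3b)). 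Reassembling these fiberwise, using the $N$-equivariance and the normalisation up to $N\times N$-translation, yields exactly the stated forms.
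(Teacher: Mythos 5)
Your overall skeleton (pushforward to $X_2$, the Burger--Roblin/closed-orbit dichotomy, disintegration over the base, the atomic versus non-atomic alternative, and Ratner's theorem on the compact fiber $X_1$) matches the paper's. The genuine gap is in your treatment of the non-atomic alternative of Case (A). Your drift argument, even granting the polynomial-divergence step you flag as the main obstacle (which the paper does not reprove either: it quotes Theorems 7.12 and 7.17 of its companion paper [MO] for exactly this extra-invariance statement), can only produce invariance of $\mu$ under a nontrivial \emph{connected subgroup} $U\times\{e\}$ of $N\times\{e\}$: the limits of $n^{-1}hn$ along $N$ lie in $N$ itself, so when $\field=\c$ the subgroup $U$ you obtain may well be one-dimensional. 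At that point your assertion that a homogeneity argument on the compact quotient $X_1$ forces the stabiliser to be all of $G$ is false: a one-parameter unipotent subgroup of $\PSL_2(\c)$ acting on a compact quotient is not uniquely ergodic in general, since Ratner's theorem allows $U$-ergodic probability measures supported on compact orbits $yL$ of subgroups $L\simeq\PSL_2(\br)$ with $L\cap N=U$, and such totally geodesic compact orbits do occur for cocompact $\Gamma_1$. So $U$-invariance of the fiber measures $\mu_x$ does not yield $\mu_x=m^{\Haar}$.

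Ruling out these intermediate measures is the real crux and requires an input your proposal does not contain: one writes $\mu_x=c_xm^{\Haar}+\sum_i\mu_{x,i}$ with $\mu_{x,i}$ supported on $y_iLN$, reduces by ergodicity to the case where a single $y_0LN$ carries everything, and extracts from the $U$-ergodic decomposition an essentially $N$-equivariant Borel map $x\mapsto\sigma_x$ from $(X_2,\brg)$ to $\mathcal P(U\ba N)$. The paper's Proposition 2.1 shows no such map exists when $U\neq N$, and its proof is not soft: it rests on a quantitative recurrence of the Patterson--Sullivan leafwise measures transverse to $U$ (Rudolph's maximal ergodic theorem together with the fact that $\mu_y^{\PS}$ gives small mass to neighborhoods of $U$), which is precisely where geometric finiteness of $\Gamma_2$ enters. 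Without this ingredient your Case (A) is incomplete, and the same mechanism explains why Case (B) is different: on the cylinder the recurrence transverse to $U$ fails, so the compact-$L$-orbit measures genuinely survive as alternative (3)(b). Note also that in Case (B) the fibers are a priori invariant only under the single unipotent generator of $\op{Stab}_N(x_2)$, not the full one-parameter group, so you need Shah's extension of Ratner's theorem to single unipotent elements, plus a final ergodicity argument to see that the resulting measure $\sigma$ on $U\ba N$ is a point mass.
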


We deduce Theorem \ref{ergg} as a consequence of Theorem \ref{main} (see subsection \ref{dedd}).

 Two main ingredients of the proof of Theorem \ref{main} are Ratner's { classification of probability measures on $X_1$ which are invariant and ergodic under a one parameter unipotent subgroup of $G$,} and the classification of $N$-equivariant {(set-valued) Borel maps} $X_2\to X_1$, established in our earlier work
 \cite{MO}. 
 


\section{Recurrence and algebraic actions on measure spaces}\label{rec}


In this section, let $G=\PSL_2(\c)$ and let $\G<G$ be a Zariski
dense geometrically finite discrete subgroup. Set $X=\G\ba G$.
Let $N$ be the horospherical subgroup 
\[
\left\{n_t:=\begin{pmatrix} 1 & 0\\t & 1\end{pmatrix}: t\in \field\right\}
\]
and let $m^{\BR}$
denote the Burger-Roblin measure on $X$ invariant under $N$.

Recall that $m^{\BR}$ is the unique ergodic $N$-invariant Radon measure on $X$ which
is not supported on a closed $N$-orbit.

Let $U<N$ be a non-trivial connected
subgroup of $N$. We denote by
${\mathcal P}(U\ba N)$ the space of probability measures on $U\ba N$.
The natural action $N$ on $U\ba N$ induces an action of $N$ on ${\mathcal P}(U\ba N)$.
 
The aim of this section is to prove the following technical result:
\begin{prop}
\label{lem:Borel-map-var}
If there exists an essentially $N$-equivariant  Borel map
\[
f:(X, m^{\BR})\to {\mathcal P}(U\ba N),
\] 
then $U=N$ and hence $f$ is essentially constant. 
\end{prop}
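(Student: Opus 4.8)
The plan is to argue by contradiction, reducing everything to a single nontrivial case and then playing the properness of the target action against the recurrence of $m^{\BR}$. Since $\field=\c$, the group $N$ is isomorphic to $(\c,+)\cong\br^2$, so its only connected subgroups are $\{e\}$, the whole group, and the one-parameter (real) subgroups. If $U=N$ then $U\backslash N$ is a point, ${\mathcal P}(U\backslash N)$ is a single point, and $f$ is trivially constant; so the entire content is to rule out the case where $U$ is one-dimensional. In that case $U\backslash N\cong\br$, and after identifying $U\backslash N$ with $N/U$ the $N$-action becomes translation through the surjective homomorphism $p\colon N\to N/U\cong\br$ with kernel $U$. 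Thus I must show that there is no essentially $N$-equivariant Borel map $f\colon(X,m^{\BR})\to{\mathcal P}(\br)$, where $n\in N$ acts on ${\mathcal P}(\br)$ by push-forward under translation by $p(n)$.

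The first observation is that this translation action of $\br$ on ${\mathcal P}(\br)$ admits no fixed point (there is no translation-invariant probability measure on $\br$) and, more importantly, is \emph{proper}: if $\nu_k\to\nu$ and $\delta_{s_k}*\nu_k\to\nu'$ in the narrow topology, then tightness of $\{\nu_k\}$ forces $\{s_k\}$ to stay bounded, since otherwise the mass of $\delta_{s_k}*\nu_k$ escapes to infinity. I would exploit this against recurrence as follows. Applying Lusin's theorem to $f$ on a set of positive finite $m^{\BR}$-measure, I obtain a compact set $K\subset X$ with $m^{\BR}(K)>0$ on which $f$ is continuous; then $f(K)$ is compact in ${\mathcal P}(\br)$, and properness yields a constant $M$ with
\[
\{s\in\br:(\delta_s*f(K))\cap f(K)\neq\emptyset\}\subseteq[-M,M].
\]
By $N$-equivariance, any return $xn\in K$ of a point $x\in K$ satisfies $f(xn)=\delta_{p(n)}*f(x)$ with $f(x),f(xn)\in f(K)$, so every such return obeys $|p(n)|\le M$. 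In other words, all recurrence of $m^{\BR}$ to $K$ is confined to the bounded strip $\{|p(n)|\le M\}$ in $N$.

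The main obstacle, and the crux of the argument, is that this confinement is not yet contradictory: the strip $\{|p(n)|\le M\}$ still has infinite Haar measure in $N\cong\br^2$ because it is unbounded in the $U$-direction, and $U$ acts trivially on the target. Hence conservativity of the full $N$-action, which only guarantees that $\{n:xn\in K\}$ has infinite Haar measure, is perfectly consistent with all returns occurring inside the strip. To close the argument I must upgrade $N$-conservativity to recurrence \emph{transverse} to $U$: I need that, for a positive-measure set of $x\in K$, there are returns $xn\in K$ with $|p(n)|$ arbitrarily large. Equivalently, viewing $m^{\BR}$ as $U$-invariant and passing to the induced action of $N/U\cong\br$ on the $U$-ergodic components, I must rule out that this transverse $\br$-action is dissipative (a dissipative transverse action is essentially equivalent to the existence of the map $f$ being sought, so this step cannot be purely formal).

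This is precisely where the hypotheses on $\Gamma$ enter, and where I expect the real work to lie. Using that $\Gamma$ is Zariski dense and geometrically finite, together with the fact that $m^{\BR}$ is the unique $N$-ergodic conservative Radon measure not supported on a closed $N$-orbit, I would establish directional (transverse) recurrence of $m^{\BR}$ directly from the geometry: for $m^{\BR}$-a.e.\ point the associated horosphere recurs to the convex core in every direction (reflecting that radial limit points carry full Patterson--Sullivan measure), which forces returns with $|p(n)|\to\infty$ and violates the confinement $|p(n)|\le M$. Alternatively one shows that strip confinement would make $m^{\BR}$ fiber over the translation action of $N/U$, contradicting either its $N$-ergodicity or the fact that it is not carried by a closed $N$-orbit. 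Either way the contradiction shows that $U$ cannot be one-dimensional; hence $U=N$ and $f$ is essentially constant.
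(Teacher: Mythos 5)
Your reduction to the one-dimensional case, the properness of the translation action on ${\mathcal P}(\br)$ (this is precisely the paper's Lemma~\ref{lem:prob-R}), the use of Lusin's theorem, and the resulting confinement of all returns to a strip $\{|p(n)|\le M\}$ all match the paper's argument. You have also correctly located the crux: everything hinges on recurrence of $m^{\BR}$ that is unbounded \emph{transversally} to $U$, i.e.\ unbounded in $U\ba N$. But at exactly that point your proposal stops being a proof. Neither of your two suggested routes closes the gap: the second you concede yourself may be circular, and the first --- ``radial limit points carry full Patterson--Sullivan measure, hence the horosphere recurs in every direction'' --- does not follow. Fullness of the radial limit set says nothing about how the leafwise Patterson--Sullivan measure $\mu_x^{\PS}$ distributes its mass along an $N$-leaf, and a priori all of that mass, hence all recurrence, could sit inside a bounded neighborhood of a coset of $U$.

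The paper's proof of the transverse recurrence (Lemma~\ref{lem:recurrence}) needs two genuinely nontrivial inputs that your sketch omits. First, Rudolph's maximal-ergodic-type theorem for the $\PS$ measure along $N$-orbits, which gives the quantitative lower bound $\mu_x^{\PS}\{n_t\in B_N(T): xn_t\in B\}\geq 0.9\,\mu_x^{\PS}(B_N(T))\,m^{\BMS}(B)$ for all large $T$. Second --- and this is where Zariski density actually enters --- the non-concentration estimate $\mu_y^{\PS}(B_N(1)\cap {\rm Nbd}_{\rho}(U))\le \ep\,\mu_y^{\PS}(B_N(1))$ from \cite{MO}, combined with the convergence $\mu^{\PS}_{xa_{-\log T_i}}\to\mu^{\PS}_y$ along a radial return sequence, which shows that the strip ${\rm Nbd}_R(U)$ carries a negligible fraction of $\mu_x^{\PS}(B_N(T_i))$. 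Only the combination of these two estimates forces returns outside every bounded neighborhood of $U$. There is also a transfer step you skip: the recurrence is proved for sets of positive $\BMS$ measure and $\BMS$-a.e.\ basepoint, and one must pass to the $\BR$-positive set $Q$ by thickening it to $QB_N(\rho_0)$ (using the quasi-product structure relating $m^{\BMS}$ and $m^{\BR}$) and using that $X'N$ is $\BR$-conull. Without these ingredients the confinement to a strip is, as you yourself observe, perfectly consistent with conservativity, and no contradiction is reached.
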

{For the proof, we will first observe that the $N$ action on ${\mathcal P}(U\ba N)$ is smooth ~\cite[Def.\ 2.1.9]{Zi}.
By the fact that $m^{\BR}$ is $N$-ergodic, it then follows that $f$ is essentially  concentrated on a single $N$-orbit 
 in ${\mathcal P}(U\ba N)$. We will use a recurrence property of $m^{\BR}$, which is stronger than the conservativity, to
  prove $U=N$.

We begin with the following lemma.
The space $\mathcal P(\bbr)$ is equipped with a weak topology: i.e.,
$\nu_n \to \nu$ if and only if $\nu_n(\psi)\to \nu(\psi)$ for all $\psi\in C_c(\br)$.
\begin{lem}\label{lem:prob-R}
 If  $\{t_n:n=1,2, \cdots \}$
is  sequence in $\br$, so that $t_{n*}\sigma\to\sigma'$ for some $\sigma,\sigma'\in{\mathcal P}(\bbr)$, then
 $\{t_{n}\}$ is bounded. 
\end{lem}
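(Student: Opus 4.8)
The plan is to argue by contradiction. Here the relevant action of $\br$ on $\mathcal P(\br)$ is by translation, so that $t_{n*}\sigma(\psi)=\int_\br \psi(x+t_n)\,d\sigma(x)$ for every $\psi\in C_c(\br)$. I will show that if $\{t_n\}$ were unbounded, then the mass of $t_{n*}\sigma$ would escape to infinity, forcing its weak limit $\sigma'$ to be the zero measure and thereby contradicting $\sigma'\in\mathcal P(\br)$.

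So suppose $\{t_n\}$ is unbounded. After passing to a subsequence I may assume $|t_n|\to\infty$, and by the evident symmetry $x\mapsto -x$ it suffices to treat the case $t_n\to+\infty$. Fix $\psi\in C_c(\br)$ and choose $R>0$ with $\supp\psi\subset[-R,R]$. For each fixed $x\in\br$, once $t_n>R-x$ we have $x+t_n>R$ and hence $\psi(x+t_n)=0$; thus $\psi(\,\cdot+t_n)\to 0$ pointwise on $\br$. Since $|\psi(x+t_n)|\le\|\psi\|_\infty$ and $\sigma$ is a probability measure, the dominated convergence theorem yields
\[
t_{n*}\sigma(\psi)=\int_\br \psi(x+t_n)\,d\sigma(x)\longrightarrow 0 .
\]

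On the other hand, $t_{n*}\sigma\to\sigma'$ in the weak topology means exactly that $t_{n*}\sigma(\psi)\to\sigma'(\psi)$ for every $\psi\in C_c(\br)$. Combining this with the previous step gives $\sigma'(\psi)=0$ for all $\psi\in C_c(\br)$, whence $\sigma'=0$. This contradicts the hypothesis $\sigma'\in\mathcal P(\br)$, which has total mass one, so $\{t_n\}$ must be bounded.

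The statement is elementary and I do not anticipate a genuine obstacle; the only point worth flagging is that the topology in play is the vague topology (testing against $C_c$ rather than $C_b$), which by itself does not detect escape of mass to infinity. The content of the lemma is precisely that the assumption $\sigma'\in\mathcal P(\br)$ — namely that no mass is lost in passing to the limit — is what rules out an unbounded translation sequence.
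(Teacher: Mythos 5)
Your proof is correct and rests on the same idea as the paper's: translating by an unbounded sequence pushes the mass of $\sigma$ out of every compact set, which is incompatible with the vague limit $\sigma'$ having total mass one. The paper implements this with an explicit bump function equal to $1$ on an interval carrying $0.9$ of the mass of both $\sigma$ and $\sigma'$, while you use dominated convergence to conclude $\sigma'=0$ outright; the difference is cosmetic.
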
 

\begin{proof}
Assume the contrary and after passing to a subsequence suppose $t_n\to\infty.$
Since $\sigma$ and $\sigma'$ are probability measures on $\bbr,$
there is some $M>1$ such that 
\[
\sigma([-M,M])>0.9 \quad\text{and}\quad \sigma'([-M,M])>0.9.
\]

Let $\psi \in C_c(\bbr)$ be a continuous function so that $0\leq \psi \leq 1,$
$\psi |_{[-M,M]}=1$ and $\psi |_{(-\infty,-M-1)\cup(M+1,\infty)}=0.$
Since $t_n\to\infty$ we have 
\[
\left( [-M-1,M+1]-t_n\right) \cap[-M-1,M+1]=\emptyset \text{ for all large $n.$}
\]
Therefore, $t_{n*}\sigma (\psi )<0.1$ but $\sigma'(\psi )>0.9,$
which contradicts the assumption that $t_{n*}\sigma\to\sigma'.$  
\end{proof}


As was mentioned above, we will need certain recurrence properties of the action of $N$
on $(X,m^{\BR})$. This will be deduced from
 recurrence properties of the Bowen-Margulis-Sullivan measure
$m^{\BMS}$ on $X$ with respect to the diagonal flow
 ${\rm diag}(e^{t/2},e^{-t/2})$. We normalize
so that $m^{\BMS}$ is the probability measure. These two measures $m^{\BMS}$ and $m^{\BR}$
are quasi-product measures and on weak-stable manifolds (i.e., locally transversal to $N$-orbits),
they are absolutely continuous to each other. 

Set $M=\{{\rm{diag}}(z,z^{-1}): |z|=1\}$. Then $G/M$ can be identified with
the unit tangent bundle of the hyperbolic $3$-space $\bH^3$. 
Hence for every $g\in G$, we can associate a point $g^-$ in the boundary
of $\bH^3$ which is the backward end point of the geodesic determined by the tangent vector $gM$.

Now the set $X_{\rm rad}:=\{\Gamma g\in X: g^- \text{ is a radial limit point of $\Gamma$}\}$
has a full $\BMS$-measure as well as a full $\BR$-measure.
For $x\in X_{\rm rad}$, $n \mapsto xn$ is a bijection $N\to xN$,
and $\mu_x^{\PS}$ denotes the leafwise measure of $m^{\BMS}$, considered as a measure on $N$ (see ~\cite[\S2]{MO}).

We recall the following: 

\begin{thm} [\cite{Rudol}, Theorem 17] \label{thm:Rudol}
For any Borel set $B$ of $X$ and any $\eta>0$, the set 
$$
\left\{x\in X_{\rm rad}:\liminf_T \tfrac{1}{\mu_x^{\PS}(B_N(T))}\int_{B_N(T)}
\chi_{B} (x{n_t})d\mu_x^{\PS}(\tbf)\geq (1-\eta)m^{\BMS}( B )\right\}
$$
has full $\BMS$ measure $m^{\BMS}$.
\end{thm}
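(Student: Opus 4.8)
The plan is to read the statement as a one-sided pointwise equidistribution theorem for the Patterson--Sullivan measure along expanding horospheres, and to derive it from the ergodic behaviour of the geodesic flow $a_s:=\op{diag}(e^{s/2},e^{-s/2})$ on the finite system $(X,m^{\BMS})$ via the self-similarity of the leafwise measures $\mu_x^{\PS}$. Writing $A_T f(x):=\mu_x^{\PS}(B_N(T))^{-1}\int_{B_N(T)}f(xn_t)\,d\mu_x^{\PS}(t)$, I would first aim to prove, for every $f\in C_c(X)$, the clean equidistribution $A_Tf(x)\to\int_X f\,dm^{\BMS}$ for $m^{\BMS}$-a.e.\ $x\in X_{\rm rad}$, together with a maximal bound for $\sup_T A_T|f|$. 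Since $m^{\BMS}$ is a finite Radon measure and $C_c(X)$ is dense in $L^1(m^{\BMS})$, the Banach principle would then extend the convergence to all $f\in L^1(m^{\BMS})$, in particular to $f=\chi_B$. If only a one-sided (weak) maximal inequality is available --- which, as explained below, is what I expect --- one obtains exactly the stated lower bound $\liminf_T A_T\chi_B\ge (1-\eta)m^{\BMS}(B)$ after approximating $\chi_B$ from below via inner regularity, the loss $\eta$ absorbing the part of the maximal control lost to the fractal weighting.

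The engine is the exact scaling of the leafwise PS measures under $a_s$. Since $a_s$ normalizes $N$ with $a_{-s}n_t a_s=n_{e^s t}$, it carries the orbit $xN$ to $(xa_{-s})N$, and the $\delta$-conformal density defining $\mu^{\PS}$ transforms by the factor $e^{\delta s}$, where $\delta$ is the critical exponent of $\G$. Decomposing $B_N(T)$ into the annuli $A_k=B_N(e^k)\setminus B_N(e^{k-1})$ for $1\le k\le\log T$, each annulus is the $a_{-k}$-conjugate of the fixed unit annulus $A_0=B_N(1)\setminus B_N(e^{-1})$, and the equivariance gives
\[
\int_{A_k} f(xn_t)\,d\mu_x^{\PS}(t)=e^{\delta k}\int_{A_0} f\big((xa_{-k})\,n_\sigma\, a_k\big)\,d\mu_{xa_{-k}}^{\PS}(\sigma),\qquad \mu_x^{\PS}(A_k)=e^{\delta k}\,\mu_{xa_{-k}}^{\PS}(A_0).
\]
Summing over $k$, both the numerator and the denominator of $A_Tf(x)$ become exponentially weighted samples of local quantities taken along the backward geodesic orbit $\{xa_{-k}\}$, so the problem is transferred to the $a_s$-dynamics on $(X,m^{\BMS})$.

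To evaluate these weighted sums I would invoke the mixing of $a_s$ on $(X,m^{\BMS})$, which is classical for geometrically finite $\G$ (Sullivan, Rudolph). Thickening the unit annular piece $A_0$ into a genuine flow box and integrating against the quasi-product structure of $m^{\BMS}$ --- leafwise $\mu^{\PS}$ along $N$-orbits, the opposite PS conditional transversally, and arclength $ds$ along the flow --- turns the forward push by $a_k$ into the statement that the flowed horospherical piece equidistributes, with limiting mass $m^{\BMS}(f)$; since the exponential weights $e^{\delta k}$ are geometric and dominated by the top block, the normalization by $\mu_x^{\PS}(B_N(T))$ cancels both the weights and the transverse density and leaves $\int_X f\,dm^{\BMS}$. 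The accompanying maximal inequality I would obtain by dominating the PS-weighted horospherical maximal operator, through the same self-similarity, by the Hopf maximal operator of the $a_s$-flow, for which the Hopf maximal ergodic theorem applies on the finite space $(X,m^{\BMS})$.

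The hard part will be the ``moving target'' created by the trailing $a_k$ in the displayed identity: the local integrand $f((xa_{-k})n_\sigma a_k)$ is the test function flowed by $a_k$ at the $k$-th step, so the weighted sum is not a Birkhoff sum of a single fixed function and the pointwise ergodic theorem does not apply verbatim. Resolving this is exactly where the flow-box thickening and the local product structure enter, trading the discrete forward push for an honest time average of a fixed function at the cost of a controlled error governed by the uniform continuity of $f$. The second, and genuinely geometric, difficulty is the non-uniformity of $\mu_x^{\PS}(B_N(T))$ and of the recurrence of $\{xa_{-k}\}$ during deep cusp excursions, where the injectivity radius degenerates; here I would use the finiteness of $m^{\BMS}$ together with a Borel--Cantelli / maximal control on the proportion of time the backward orbit spends in the thin cusp neighbourhoods to keep the cusp contributions from spoiling the bound. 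It is precisely these two effects --- the moving target and the cusp degeneration --- that I expect to survive only as a one-sided estimate, which is why the theorem is stated with $\liminf$ and the factor $(1-\eta)$ rather than as a two-sided a.e.\ limit.
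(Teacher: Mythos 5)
First, a point of order: the paper contains no proof of this statement --- it is imported verbatim from Rudolph (\cite{Rudol}, Theorem 17), so your attempt has to be measured against Rudolph's argument, which runs through the \emph{ergodicity} of the geodesic flow on $(X,m^{\BMS})$ combined with a Hopf-type covering/maximal estimate on the leafwise measures, not through mixing. Your renormalization identity $a_{-s}n_ta_s=n_{e^st}$ and the resulting transfer of $B_N(T)$-averages to the backward geodesic orbit is exactly the right mechanism (it is the same scaling the paper itself exploits in the proof of Lemma~\ref{lem:recurrence}), but two of your steps have genuine gaps. The first is the passage from mixing to an almost-everywhere statement: mixing controls integrated correlations against fixed test functions and does not by itself give pointwise convergence along a.e.\ orbit; your ``moving target'' difficulty is named but not resolved, and the flow-box thickening does not manufacture the weak-type maximal inequality you invoke via the Banach principle. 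In Rudolph's setting the maximal control comes from a covering argument on the leafwise PS measures, and the doubling-type input that makes such a covering lemma work for these fractal conditionals is itself a nontrivial consequence of shadow-lemma estimates; ``dominating by the Hopf maximal operator of the flow'' is asserted, not proved, and it is precisely the skewed weights that obstruct it.

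The second gap is quantitative and geometric: your claim that the geometric weights $e^{\delta k}$ make the sum over annuli ``dominated by the top block'' is false in general for geometrically finite groups with cusps. The local masses $\mu_{xa_{-k}}^{\PS}(B_N(1))$ fluctuate subexponentially (with polynomial corrections depending on cusp ranks) during deep cusp excursions --- this is the content of Schapira's shadow-lemma estimates \cite{Sch-nondiv} --- so $\mu_x^{\PS}(B_N(T))$ is \emph{not} comparable to $e^{\delta\log T}$ times the last local mass, and the reduction of the normalized average to a single local block at scale $\log T$ breaks down exactly where you would need it. A smaller but real flaw: approximating $\chi_B$ \emph{from below} by continuous functions via inner regularity fails for a Borel set with empty interior (any continuous $f\le\chi_B$ is supported in the interior of $B$); one must instead pass to the complement and approximate open sets from above, which again requires the upper (maximal) bound you have not established. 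So the skeleton is sound and close in spirit to Rudolph's proof, but the two load-bearing steps --- the weak-type maximal inequality for the PS-weighted averages and the uniform comparison of $\mu_x^{\PS}(B_N(T))$ across scales --- are missing, and they are where the actual work in \cite{Rudol} lies.
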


\begin{lem}\label{lem:recurrence}  Let $U$ be a one-dimensional connected subgroup of $N$. Then
for every subset $B\subset X$ of positive $\BMS$-measure,
 the set
\[
 \{{n}\in U\ba N: x{n}\in B\}
\]
is unbounded for $m^{\BMS}$-a.e.\ $x\in X.$
\end{lem}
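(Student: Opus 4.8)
The plan is to derive this recurrence statement for the $N$-action modulo $U$ directly from Rudolph's theorem (Theorem \ref{thm:Rudol}) applied to the set $B$. The key observation is that unboundedness of $\{n\in U\ba N : xn\in B\}$ in the quotient $U\ba N$ is a \emph{weaker} conclusion than genuine recurrence in $N$: since $N\cong \br^2$ (as $\field=\c$) and $U$ is one-dimensional, a subset of $N$ that is unbounded in $N$ could conceivably project to a bounded set in $U\ba N$ (if it escapes only in the $U$-direction). So the real content is that the return times to $B$ must be unbounded \emph{transverse} to $U$, not merely unbounded in $N$.

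First I would apply Theorem \ref{thm:Rudol} with $\eta = \tfrac12$ (say) to the positive-measure set $B$, obtaining a full-$\BMS$-measure set of points $x\in X_{\rm rad}$ for which the averaged return frequency over the Følner-type windows $B_N(T)$ is bounded below by $(1-\eta)m^{\BMS}(B)>0$. The positivity of this liminf forces the return-time set $R_x := \{n\in N : xn\in B\}$ to carry positive $\mu_x^{\PS}$-mass in every sufficiently large ball $B_N(T)$, and in fact a definite proportion of it. Since the Patterson--Sullivan leafwise measures $\mu_x^{\PS}$ are non-atomic and fully supported on the relevant directions, $R_x$ must accumulate at infinity in $N$.

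The crux of the argument — the step I expect to be the main obstacle — is promoting ``unbounded in $N$'' to ``unbounded in $U\ba N$.'' The plan is to argue by contradiction: if the projection of $R_x$ to $U\ba N$ were bounded, then $R_x$ would be contained in a set of the form $U\cdot K$ for some compact $K\subset N$, i.e.\ a bounded-width tube around finitely many $U$-cosets. I would then need to show that the $\mu_x^{\PS}$-mass of such a tube, intersected with $B_N(T)$, grows strictly slower than $\mu_x^{\PS}(B_N(T))$ as $T\to\infty$, so that the ratio in Rudolph's liminf would be forced to $0$, contradicting its positivity. This requires quantitative control on how the Patterson--Sullivan measure distributes across $U$-cosets — essentially that the PS-measure cannot concentrate on a single one-dimensional direction at scale $T$, which should follow from the conformal density / shadow-lemma estimates for $\mu_x^{\PS}$ on the geometrically finite group $\Gamma$ (the measure of a thin tube is negligible compared to the measure of the full ball).

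Finally, I would combine these: the full-measure set from Rudolph's theorem gives, for $\BMS$-a.e.\ $x$, a return-time set $R_x$ whose $U\ba N$-projection is unbounded, which is exactly the assertion. One technical point to handle carefully is the passage between statements holding along the $\liminf$ over continuous $T$ and the pointwise accumulation at infinity; since $B$ is an arbitrary positive-measure Borel set rather than open, I would work with the averaged characteristic function $\chi_B$ as in Rudolph's formulation and extract accumulation directly from the positive density, rather than attempting to locate individual return points. The non-atomicity and the tube-measure decay are the two inputs that make the transverse unboundedness work.
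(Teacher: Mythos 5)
Your plan follows the same architecture as the paper's proof: apply Rudolph's theorem to get a definite $\mu_x^{\PS}$-proportion of return times in $B_N(T)$, then show that a bounded-width tube ${\rm Nbd}_R(U)$ carries a negligible fraction of $\mu_x^{\PS}(B_N(T))$ for large $T$, so the returns cannot all project to a bounded subset of $U\ba N$. The one place where your justification would not go through as stated is the tube-decay estimate itself: a covering argument from the shadow lemma gives roughly $\mu_x^{\PS}\bigl({\rm Nbd}_\rho(U)\cap B_N(1)\bigr)\lesssim \rho^{\delta-1}\mu_x^{\PS}(B_N(1))$, which fails to decay when the critical exponent satisfies $\delta\le 1$, so ``conformal density / shadow-lemma estimates'' alone do not suffice. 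What the paper actually uses is \cite[Lemma 4.3]{MO}, which rests on the fact that the PS measure gives zero mass to any line (a consequence of Zariski density of $\Gamma$) together with a compactness argument over $\supp(m^{\BMS})$; this is transported to scale $T$ by choosing, for $x\in X_{\rm rad}$, a subsequence $T_i\to\infty$ with $xa_{-\log T_i}\to y\in\supp(m^{\BMS})$ and using the weak convergence $\mu^{\PS}_{xa_{-\log T_i}}\to\mu_y^{\PS}$ to renormalize the tube estimate from $B_N(1)$ to $B_N(T_i)$. Note this only yields the smallness of the tube along the subsequence $T_i$, which is still enough to contradict the uniform positive lower bound from Rudolph's theorem. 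With that substitution for your tube estimate, the argument is the paper's.
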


\begin{proof}


We denote by $\rm{Nbd}_R(U)$ the $R$-neighborhood of $U$, i.e.,
$${\rm{Nbd}}_R(U)=\{n_t\in N: |t-s|<R\text{ for some $n_s\in U$}\}.$$
We set $B_N(R):= {\rm{Nbd}}_R(\{e\})$ which is the $R$-neighborhood of $e$.

Let $B\subset X$ be any Borel set of positive BMS-measure. 
Then by Theorem~\ref{thm:Rudol}, there is a $\BMS$ full measure set $X'$ of $X_{\rm rad}$
with the following property:
for all $x\in X'$, there is $T_x>0$ such that if $T>T_x$, then
\be\label{eq:max-erg}
\mu_x^{\PS}\{{n_t}\in B_N(T): xn_t \in B\}\geq 
0.9\, \mu_x^{\PS}(B_N(T))m^{\BMS}(B).
\ee

Let $x\in X'$. Since $x$ is a radial limit point for $\Gamma$, there exists a sequence $T_i\to\infty$
 so that $xa_{-\log T_i}$ converges to some  $y\in\supp(m^{\BMS})$.
Therefore, we have
\be\label{eq:PS-cont}
\mu_{xa_{-\log T_i}}^{\PS}\to\mu_y^{\PS},
\ee
in the space of regular Borel measures on $N$ endowed with  the weak-topology (see~\cite[Lemma 2.1]{MO}).

Moreover, by ~\cite[Lemma 4.3]{MO}, for every $\ep>0,$
there exists  $\rho_0>0$ such that for every $0<\rho\leq\rho_0$ we have
\be\label{eq:var-null}
\mu_y^{\PS}(B_N(1)\cap {\rm{Nbd}}_{\rho}(U))\leq \ep\cdot \mu_y^{\PS}(B_N(1))
\ee
Since 
\[
\tfrac{\mu_x^{\PS}(B_N(T_i)\cap {\rm{Nbd}}_{R}(U)) }{\mu_x^{\PS}(B_N(T_i))}
=\tfrac{\mu_{xa_{-\log T_i}}^{\PS}(B_N(1)\cap {\rm{Nbd}}_{R/T_i}(U)) }{\mu_{xa_{-\log T_i}}^{\PS}(B_N(1))},
\]
it follows from \eqref{eq:PS-cont} and \eqref {eq:var-null} that 
 for every $\ep>0$ and for all sufficiently large $i$ such that $R/T_i<\rho$,
\be\label{eq:nbhd-U}
\mu_x^{\PS}(B_N(T_i)\cap {\rm{Nbd}}_{R}(U))\leq \ep \cdot \mu_x^{\PS}(B_N(T_i)).
\ee

Put $\ep=1/10 \cdot m^{\BMS}(B).$
Given any $j$, there exists $i_j>\max\{j, T_x\}$ such that $$ \mu_x^{\PS}(B_N(T_j))\leq\ep\cdot \mu_x^{U}(B_N(T_{i_j})) .$$
Then for all sufficiently large $i>i_j$, we have
$$ \mu_x^{\PS} (B_N(T_j) ) +\mu_x^{\PS} (B_N(T_i)\cap {\rm{Nbd}}_{R}(U)) \le 2 \ep \mu_x^{\PS} (B_N(T_i)) .$$
Therefore
it follows from  ~\eqref{eq:max-erg}
that for any $j$ and for all $i>i_j$,
\begin{multline*}
 \mu_x^{\PS}\{{n_t}\in B_N(T_i)\setminus (B_N(T_j)\cup  {\rm{Nbd}}_{R}(U) ): x{n_t} \in B\}\geq \\
0.5\mu_x^{\PS}(B_N(T_i))m^{\BMS}(B)>0.
\end{multline*}
This implies that the set of $x$ with $xn_t\in B$ cannot be contained in any bounded neighborhood of $U$, proving the claim.
\end{proof}

\medskip

\noindent
{\it Proof of Proposition ~\ref{lem:Borel-map-var}.} If $U=N$, the claim is clear. Hence we suppose $U$ is a one-dimensional
connected subgroup of $N$.
First by modifying $f$ on a $\BR$-null set, we may assume that 
for all $x\in X$, and for all $n\in N$,
\[
f(x{n})=n_* f(x) .
\]

 Fix a compact subset $Q\subset X_{\rm{rad}}$
such that $f$ is continuous on $Q$ and $m^{\BR}(Q)>0$. This is possible by Lusin's theorem. We claim that
 for some $y\in Q$, the set
\[
\{n\in N: yn\in Q\}
\] 
is unbounded in the quotient space $U\ba N$.

First note that
there exists $\rho_0>0$ such that $QB_N({\rho_0})$ has a positive $\BMS$-measure.

By Lemma~\ref{lem:recurrence} there is a $\BMS$-full measure set $X'$  
so that for all $x\in X'$,
\[
\{{n}\in N : x{n} \in QB_N({\rho_0})\}
\text{
is unbounded in $U\ba N.$ }\] 
Using the fact that $N$ is abelian, the above implies that
\be\label{unb}
\{{n}\in N : y{n} \in Q\}\text{ is unbounded in $U\ba N$ for all $y\in X'N$.}  
\ee

The set $X'N$ is a $\BR$-conull set and $m^{\BR}(Q)>0$.
Therefore, there is some $y\in Q$ which satisfies \eqref{unb}, proving  the claim.
Now, there is a sequence $\{n_{t_i}\in N\}$ such that $n_{t_i}\to \infty$ in $U\ba N$
and that $yn_{t_i}\in Q$ and $yn_{t_i}\to z\in Q.$ The function $f$ is continuous on $Q$. Therefore we get 
\[
(n_{t_i})_*f(y)\to f(z).
\] 
Since $f(y)$ and $ f(z)$ are probability measures on $U\ba N\simeq \br$, and $n_{t_i}\to \infty$ in $U\ba N\simeq \br$,
this contradicts Lemma~\ref{lem:prob-R}. 
This yields that $U=N$ is the only possibility and finishes the proof.
\qed

\section{Proof of Theorems \ref{ergg} and \ref{main}}
We continue the notations set up in the introduction.
Let $\field=\br$ or $\c$ and $G=\gf$. 
Let $\Gamma_1<G$ be a cocompact lattice
and $\Gamma_2<G$ be a geometrically finite and Zariski dense subgroup.
Set $X_i=\Gamma_i\ba G$ for $i=1,2$.
Let $Z=X_1\times X_2$. Let $N<G$ be a horospherical subgroup.
Without loss of generality, we may assume
  $$N:=\left\{n_t:=\begin{pmatrix} 1 & 0\\t & 1\end{pmatrix}: t\in \field\right\}.$$

We denote by $\brg$ the $N$-invariant Burger-Roblin measure on $X_2$; this is unique
up to a constant multiple.


Let $\mu$ be a
$\Delta (N)$-invariant, ergodic, conservative {\it infinite} Radon measure on $Z$.
Let 
$$
\pi: Z\to X_2
$$ 
be the canonical projection. Since $X_1$ is compact,
the push-forward $\pi_*\mu$ defines an $N$-invariant ergodic conservative {\it infinite}
 Radon measure on $X_2$.

\begin{thm} Up to a constant multiple,
 $$\pi_*\mu= \brg\quad\text{or }\quad \pi_*\mu=\operatorname{d}\!n$$ for the $N$-invariant measure $\operatorname{d}\!n$ on 
 a closed orbit $x_2N$  homeomorphic to $\br\times \mathbb S^1$. The latter happens only when $\field=\c$ and
 $\G$ has a parabolic limit point of rank one. \end{thm}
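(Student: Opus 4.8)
The plan is to classify $\pi_*\mu$, an $N$-invariant, ergodic, conservative, \emph{infinite} Radon measure on $X_2=\Gamma_2\ba G$. The strategy is to reduce this to the known uniqueness statement for the Burger--Roblin measure by showing that $\pi_*\mu$, unless it is supported on a closed $N$-orbit, must be the unique ergodic $N$-invariant Radon measure that is not supported on a closed $N$-orbit, namely $\brg$. The two alternatives in the conclusion correspond exactly to the dichotomy: either $\pi_*\mu$ is supported on a closed $N$-orbit, or it is not.

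First I would recall the uniqueness result (Burger, Roblin, Winter) cited in the excerpt: on $X_2$, the Burger--Roblin measure $\brg$ is the \emph{unique} ergodic, conservative $N$-invariant Radon measure that is not supported on a closed $N$-orbit. Since $\pi_*\mu$ is ergodic, conservative, and $N$-invariant by construction, the whole classification reduces to analyzing the case where $\pi_*\mu$ \emph{is} supported on a closed $N$-orbit $x_2N$. So the first dichotomy is immediate, and the substantive work lies entirely in the closed-orbit branch.

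Next, in the closed-orbit case, I would analyze the structure of a closed $N$-orbit $x_2N$ in $X_2$ and the $N$-invariant Radon measures it supports. Here $N\cong\field$, and the stabilizer $\operatorname{Stab}_N(x_2)$ is a discrete subgroup; ergodicity of $\pi_*\mu$ forces it to be (up to the $N$-action) the $N$-invariant measure $\operatorname{d}\!n$ on the single closed orbit. When $\field=\br$, a closed $N$-orbit with infinite $N$-invariant measure would correspond to a trivial stabilizer, giving an orbit homeomorphic to $\br$; but I expect that for $\field=\br$ such an orbit cannot carry a conservative measure (a properly embedded line is not recurrent), so the infinite conservative closed-orbit case cannot occur. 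When $\field=\c$, however, $N\cong\c$ is two-dimensional, and $\operatorname{Stab}_N(x_2)$ can be a rank-one lattice (i.e.\ $\cong\bbz$), giving a closed orbit homeomorphic to $\br\times\mathbb S^1$, which \emph{can} support an infinite conservative $N$-invariant measure. This is precisely where the geometric hypothesis enters: a closed $N$-orbit of this topological type exists exactly when $\Gamma_2$ has a parabolic limit point of rank one, since such orbits correspond to the horospheres based at rank-one parabolic fixed points.

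The main obstacle I anticipate is the careful bookkeeping in the closed-orbit branch: establishing that the only way to obtain an \emph{infinite} conservative ergodic measure on a closed $N$-orbit is via a rank-one parabolic point when $\field=\c$, and ruling it out when $\field=\br$. This requires relating the topology of the closed orbit (hence the corank of $\operatorname{Stab}_N(x_2)$ in $N$) to the arithmetic/geometric type of the corresponding parabolic fixed point of $\Gamma_2$, and verifying that conservativity forces recurrence which in turn forces the $\br\times\mathbb S^1$ topology rather than the noncompact $\br$ or $\br^2$ types. The identification of the non-closed-orbit case with $\brg$ is, by contrast, a direct appeal to the cited uniqueness theorem and should require no further argument beyond verifying that $\pi_*\mu$ meets its hypotheses, which was already arranged by the pushforward construction.
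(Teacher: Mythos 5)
Your proposal is correct and follows essentially the same route as the paper: appeal to the Roblin--Winter uniqueness of $\brg$ among ergodic conservative $N$-invariant Radon measures not supported on a closed $N$-orbit, then in the closed-orbit case use infiniteness to exclude the torus type $\mathbb S^1\times\mathbb S^1$ and conservativity to exclude the properly embedded $\br$ (and $\br^2$) types, leaving only $\br\times\mathbb S^1$, which forces $\field=\c$ and a rank-one parabolic limit point.
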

 
 \begin{proof}
 Since $\G_2$ is assumed to be geometrically finite and Zariski dense,
 up to a proportionality, the measure $\pi_*\mu$ is either  $\brg$ or
it is the $N$-invariant measure supported on a closed $N$-orbit $x_2N$ in $X_2 $ (\cite{Roblin} and~\cite{Wi}).
In the latter case, $x_2N$ is homeomorphic to one of the following:
 $\mathbb S^1\times \mathbb S^1$, $\br\times \br$, and $\br\times \mathbb S^1$.
The first possibility cannot happen as that would mean that $\mu$ is a finite measure.
The second possibility would contradict the assumption that $\mu$ is $N$-conservative. 
Hence $x_2N$ must be $ \br\times \mathbb S^1$, up to a homeomorphism.
 \end{proof}
 
 The following is one of the main ingredients of our proof of Theorem \ref{main}, established in \cite{MO}.
\begin{thm} \label{pro} 
One of the following holds, up to a constant multiple:
\begin{enumerate}
 \item $\pi_*\mu= \brg$ and 
 $\mu$ is invariant under $U\times \{e\}$ for a non-trivial connected subgroup  $U$ of $N$;
 \item $\pi_*\mu=\brg$ and the fibers 
 of the map $\pi$ are finite with the same cardinality almost surely. 
  Moreover, in this case, $\mu$ is the graph of the $\BR$-measure in the sense of 
  Theorem \ref{main}(2);
\item $\field=\c$ and $\pi_*\mu=\operatorname{d}\!n$ for the $N$-invariant measure $\operatorname{d}\!n$ on 
 a closed orbit $x_2N$  homeomorphic to $\br\times \mathbb S^1$.
\end{enumerate} 
\end{thm}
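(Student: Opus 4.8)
The plan is to reduce everything to the conditional measures of $\mu$ along $\pi$, and then to separate two regimes according to whether these conditionals are purely atomic. First I would invoke the preceding theorem: up to a constant, $\pi_*\mu$ is either $\brg$ or the $N$-invariant measure $\operatorname{d}\!n$ on a closed orbit $x_2N\simeq\br\times\mathbb{S}^1$ (only when $\field=\c$). The latter is precisely conclusion (3), so I may assume $\pi_*\mu=\brg$. Since $X_1$ is compact, disintegrating $\mu$ over $\pi$ relative to $\brg$ produces probability conditionals $\mu_{x_2}\in\mathcal{P}(X_1)$ with $\mu=\int_{X_2}\mu_{x_2}\,\operatorname{d}\!\brg(x_2)$; the $\dN$-invariance of $\mu$ together with the $N$-invariance of $\brg$ gives the a.e.\ equivariance $n_*\mu_{x_2}=\mu_{x_2n}$ for all $n\in N$, where $N$ acts through the first coordinate. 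Thus $x_2\mapsto\mu_{x_2}$ is an essentially $N$-equivariant Borel map $X_2\to\mathcal{P}(X_1)$.

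Next I would run the dichotomy. Because $n_*\mu_{x_2}=\mu_{x_2n}$ preserves the atom structure, the supremal atom mass $x_2\mapsto\sup_{p}\mu_{x_2}(\{p\})$ is an $N$-invariant Borel function, hence a.e.\ constant by ergodicity of $\mu$; the same applies to the number of maximal atoms. In the \emph{atomic} regime, the maximal atoms form an $N$-equivariant finite-set-valued Borel map $x_2\mapsto F(x_2)\subset X_1$ of constant cardinality. This is exactly the input for the classification of $N$-equivariant set-valued Borel maps $X_2\to X_1$ from \cite{MO}: it produces $g_0\in G$ with $[\Gamma_2:g_0^{-1}\Gamma_1 g_0\cap\Gamma_2]<\infty$ and identifies $F$ with the associated algebraic correspondence. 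Since the equivariance respects the atomic/continuous splitting of the fibers, the atomic part $\mu^{\mathrm{at}}$ is itself $\dN$-invariant, so by ergodicity $\mu=\mu^{\mathrm{at}}$; a further ergodicity argument on the lower-mass atoms then realizes each $\mu_{x_2}$ as the normalized counting measure on $F(x_2)$, which is conclusion (2), the graph of the $\BR$-measure.

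The substantive case is the \emph{continuous} regime, where I aim for conclusion (1): invariance of $\mu$ under $U\times\{e\}$ for a non-trivial connected $U<N$. Here I would pass to the conditional (leafwise) measures $\theta_z$ of $\mu$ along the first-factor orbits $z(N\times\{e\})=(x_1N,x_2)$, i.e.\ the leafwise measures of the fibers $\mu_{x_2}$ along the $N$-orbits of $X_1$. These are equivariant, and their common group of translation invariances $U\le N$ is a.e.\ constant by ergodicity, with $\mu$ being $U\times\{e\}$-invariant. Everything therefore reduces to showing $U\neq\{e\}$, after which Ratner's classification of one-parameter unipotent-invariant ergodic probability measures on the compact $X_1$ forces the $U$-ergodic components of the fibers to be homogeneous, in particular invariant under a non-trivial \emph{connected} subgroup, giving~(1).

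Finally, the main obstacle. Showing $U\neq\{e\}$ --- upgrading the mere non-atomicity of the fibers to genuine translation invariance of the leafwise measures along a subgroup --- is the heart of the matter and the infinite-measure analogue of Ratner's rigidity. I would obtain it from the recurrence of $\mu$: the conservativity, fed through the recurrence of $m^{\BMS}$ along $N$ (Theorem~\ref{thm:Rudol} and Lemma~\ref{lem:recurrence}), guarantees that $N$-orbits return to compact sets along radial times, and the polynomial shearing of nearby unipotent orbits under the renormalization $a_{-\log T_i}$ then transports non-atomicity into invariance along a unipotent direction. Proposition~\ref{lem:Borel-map-var} closes this step by excluding the degenerate scenario in which the transverse conditionals are finite with $U$ proper: it forbids any non-constant $N$-equivariant map $X_2\to\mathcal{P}(U\ba N)$ for proper $U$, so a finite transverse would force $U=N$. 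I expect this shearing-plus-recurrence argument to be the most delicate ingredient, since it is precisely where the passage from a non-trivial conditional to an invariant one must be made without the compactness and finiteness available in Ratner's original setting.
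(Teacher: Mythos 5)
There is a genuine gap, and it sits exactly at the heart of the statement. The paper's own ``proof'' of this theorem is essentially a citation: the dichotomy for the case $\pi_*\mu=\brg$ --- either the fibers of $\pi$ are finite of constant cardinality, or $\mu$ acquires invariance under a non-trivial connected subgroup of $N$ in the first factor --- is taken directly from \cite[Thm.~7.12 and Thm.~7.17]{MO}, with only the remark that the joining hypothesis there is used solely through the fact that one projection is the $\BR$ measure. Your proposal instead tries to re-derive that dichotomy. The reduction to $\pi_*\mu=\brg$, the disintegration, and the atomic branch (maximal atoms give a finite-set-valued $N$-equivariant map, classified by \cite{MO}, and ergodicity forces $\mu$ to be the uniform measure on those atoms) are all reasonable, and in the atomic branch you are still leaning on \cite{MO} just as the paper does. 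But the continuous branch --- upgrading mere non-atomicity of the conditionals $\mu_{x_2}$ to genuine invariance of $\mu$ under $U\times\{e\}$ for a \emph{non-trivial connected} $U$ --- is precisely the technical core of \cite{MO}, and your one-sentence appeal to ``polynomial shearing under $a_{-\log T_i}$ transporting non-atomicity into invariance'' does not supply it. That step requires controlling the direction of the extra invariance produced by shearing (a priori it need not lie in $N\times\{e\}$, nor along a connected subgroup), and it is a long quantitative argument in \cite{MO}, not a corollary of conservativity plus Theorem~\ref{thm:Rudol}.

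Separately, your invocation of Proposition~\ref{lem:Borel-map-var} to ``close this step'' is a misapplication. That proposition takes as \emph{hypothesis} an essentially $N$-equivariant Borel map $(X_2,m^{\BR})\to\mathcal P(U\ba N)$ for an already-specified non-trivial connected subgroup $U<N$, and concludes $U=N$; in the paper it is used only later, in deriving Theorem~\ref{main}(1) from Theorem~\ref{pro}(1), to rule out the scenario where the fiber measures $\mu_x$ are supported on a single tube $y_0LN$. It cannot be used to show that the leafwise invariance group of the conditionals is non-trivial in the first place, because at that stage no such equivariant map into $\mathcal P(U\ba N)$ exists. So as written, your argument for conclusion (1) is circular at the decisive point: you need the invariance to build the objects to which the proposition applies.
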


\begin{proof}  
For the case when $\pi_*\mu=\brg$,
it follows from~\cite[Thm.~7.12 and Thm.~7.17]{MO} 
either that  the fibers of the map $\pi$ are finite 
with the same cardinality almost surely or that
$\mu$ is invariant under a non-trivial connected subgroup of $N$, yielding the cases (1) and (2).
Indeed~\cite[Thm.\ 7.12]{MO} states this under the assumption that
$\mu$ is an $N$-joining, but all that is used in the proof is the fact that 
the projection of the measure onto one of the factors is the $\BR$ measure.

\end{proof}




\subsection{Proof of Theorem \ref{main}}
\subsubsection{The case of $G=\PSL_2(\br)$.}\label{sec:sl2r}
In this case, $\brg$ is the unique infinite conservative $N$-invariant measure on $X_2$. Therefore
we may assume, after the normalization of $\brg $ if necessary,  that
$\pi_*\mu=\brg$. 
By the standard disintegration theorem, we have
\[
\mu=\int_{X_2} \mu_x \operatorname{d}\!\brg (x)
\]
where $\mu_x$ is a probability measure on $X_1$ for $\brg$-a.e.\ $x$.

Suppose that  Theorem \ref{pro}(1) holds, i.e., $\mu$  is invariant under $N\times \{e\}$.
Then, since every element in the $\sigma$-algebra
\[
\{X_1\times B: B\subset X_2\text{ is a Borel set}\}
\] 
is invariant under $N\times \{e\},$ 
we get that $\mu_x$ is an $N$-invariant probability measure on $X_1$ for $\brg$-a.e.\ $x$.

By the unique ergodicity of $N$
on the compact space $X_1$ \cite{Fu}, we have
\be\label{fin} \mu_x=m^{\Haar}\quad \text{ for $\brg$-a.e.\ $x$;}\ee
hence $\mu=m^{\Haar}\times \brg$.

If Theorem \ref{pro}(2) holds, we obtain that $\mu$ is the graph of the $\BR$-measure
as desired in Theorem \ref{main}.

\subsubsection{The case of $G=\PSL_2(\c)$}

In analyzing the three cases in Theorem \ref{pro},
we use the following  special case of
 Ratner's measure classification theorem \cite{Ra}:
  \begin{thm}\label{ratner} 
  Let  $\Gamma_1<G=\PSL_2(\c)$ be a cocompact lattice. Let $U$ be a one-parameter unipotent
  subgroup of $G$.  Let $L\simeq \PSL_2(\br)$ be the connected subgroup generated by $U$ and its transpose $U^t$. 
  Then  any ergodic $U$-invariant probability measure on $\G_1\ba G$ is
either the Haar measure or a $v^{-1} Lv$-invariant measure supported on a compact orbit 
$\Gamma_1\ba \Gamma_1 g Lv$ 
for some $g\in G$ and $v\in N$. 
\end{thm}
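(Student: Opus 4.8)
The plan is to obtain the statement as a corollary of Ratner's measure classification theorem, the real work being a short analysis of the connected subgroups of $G=\PSL_2(\c)$ between $U$ and $G$. Since $U^t$ denotes the transpose, we are in the standard position where $U$ is a one-parameter subgroup of the horospherical group $N$, and $L=\langle U,U^t\rangle\cong\PSL_2(\br)$ contains the real diagonal torus $A_\br$. As $U$ is a one-parameter $\Ad$-unipotent subgroup and $\Gamma_1$ is a lattice, Ratner's theorem applies: every ergodic $U$-invariant probability measure $\nu$ on $\Gamma_1\ba G$ is homogeneous, so there is a connected closed subgroup $H$ with $U\subseteq H\subseteq G$ and a point $x=\Gamma_1 g_0$ for which $xH$ is closed, carries a finite $H$-invariant measure, and $\nu$ equals that measure. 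It then remains to determine the possible $H$.

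First I would exploit that $\Gamma_1$, being cocompact, contains no nontrivial unipotent element. Writing $\Lambda=g_0^{-1}\Gamma_1 g_0\cap H$, the existence of a finite $H$-invariant measure on $xH$ means $\Lambda$ is a lattice in $H$. If the unipotent radical $R_u(H)$ were nontrivial, then $\Lambda\cap R_u(H)$ would be a lattice in $R_u(H)$ (a standard property of lattices relative to the unipotent radical), producing a nontrivial unipotent element of $\Gamma_1$ — a contradiction. Hence $R_u(H)=\{e\}$ and $H$ is reductive.

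Next I would list the connected reductive subgroups of $\PSL_2(\c)$ containing the unipotent group $U$. Tori and the compact form $\mathrm{SU}(2)$ contain no nontrivial unipotent, so they are ruled out, leaving only $G$ itself and the copies of $\PSL_2(\br)$. If $H=G$ then $\nu$ is the Haar measure, the first alternative. Otherwise $H\cong\PSL_2(\br)$ contains $U$. Since any two one-parameter unipotent subgroups of such a copy are conjugate inside it and all copies of $\PSL_2(\br)$ are $G$-conjugate, every such $H$ has the form $n^{-1}Ln$ with $n\in N_G(U)$. A direct computation identifies $N_G(U)$: its elements are products of $N$ with the real and imaginary diagonal tori, and the latter already normalize $L$. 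Consequently the copies of $\PSL_2(\br)$ containing $U$ are exactly $\{v^{-1}Lv:v\in N\}$, which gives $H=v^{-1}Lv$ for some $v\in N$.

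Finally, for $H=v^{-1}Lv\cong\PSL_2(\br)$ I would promote the finite-volume orbit to a compact one: the lattice $\Lambda\subset g_0^{-1}\Gamma_1 g_0$ again contains no unipotent, and a lattice in $\PSL_2(\br)$ without nontrivial unipotent elements is cocompact, so $xH$ is compact. After renaming the base point this orbit takes the stated form $\Gamma_1\ba\Gamma_1 gLv$, completing the second alternative. The step I expect to be the main obstacle is not the measure-classification input, which is a black-box citation, but the algebraic bookkeeping: establishing the reductive/unipotent-radical dichotomy cleanly and carrying out the normalizer computation that forces the conjugating element into $N$.
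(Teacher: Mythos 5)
Your derivation is correct and matches the paper's approach: the paper gives no proof of this statement, presenting it as a direct special case of Ratner's measure classification theorem \cite{Ra} (with the remark that \cite{Shah} extends it to a single unipotent element), so the only content to supply is exactly the algebraic bookkeeping you carry out. Your two key steps --- ruling out intermediate subgroups with nontrivial unipotent radical via the absence of unipotent elements in the cocompact lattice $\Gamma_1$, and the computation of $N_G(U)$ forcing the conjugating element into $N$ modulo the normalizer of $L$ --- are the standard and correct way to extract the stated dichotomy from Ratner's homogeneity theorem.
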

Indeed, the same conclusion
holds for any ergodic $u$-invariant probability measure on $\G_1\ba G$
 for any non-trivial element $u\in U$,
as was obtained in \cite{Shah}.

Also note that in the second case of Theorem~\ref{ratner} 
 the support of the measure is contained in
$yLN$  for some compact orbit $yL$.


We now investigate each case of Theorem \ref{pro} as follows:

\begin{thm} \label{prod} 
For $k=1,2,3$, Theorem~\ref{pro}(k) implies Theorem~\ref{main}(k).
\end{thm}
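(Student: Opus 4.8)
The plan is to treat the three cases of Theorem~\ref{pro} separately, invoking the work already assembled in the preceding sections. The case $k=1$ I would handle exactly as in the $\PSL_2(\br)$ warm-up of Section~3: here $\pi_*\mu=\brg$ and $\mu$ is invariant under $U\times\{e\}$ for some nontrivial connected subgroup $U<N$. Disintegrating $\mu=\int_{X_2}\mu_x\,\op{d}\!\brg(x)$ over the projection $\pi$, the $U\times\{e\}$-invariance forces each fiber measure $\mu_x$ to be a $U$-invariant probability measure on the compact space $X_1=\Gamma_1\ba G$. Now $U$ is a one-parameter unipotent subgroup of $G=\PSL_2(\c)$, so I would apply the ergodic decomposition of $\mu_x$ together with Ratner's Theorem~\ref{ratner}: $\brg$-almost every $\mu_x$ is a convex combination of the Haar measure and measures supported on compact $v^{-1}Lv$-orbits. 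The goal is to show that the Haar component is forced, giving $\mu=m^{\Haar}\times\brg$, i.e.\ Theorem~\ref{main}(1).

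The mechanism that rules out the $L$-orbit components is precisely Proposition~\ref{lem:Borel-map-var}, and wiring it in is the main obstacle. The point is that if the $L$-orbit part were nonzero on a positive-measure set, the assignment $x\mapsto(\text{the conditional measure restricted to the singular part})$ would produce, after normalizing and projecting along the $L$-orbit structure, an essentially $N$-equivariant Borel map from $(X_2,m^{\BR})$ into a space of probability measures on a homogeneous quotient of the form $U\ba N$. Concretely, a compact $v^{-1}Lv$-orbit meets $N$-translates in a way that records a transverse direction, and the $N$-action on $X_2=X$ (identifying $\G_2$ with the $\Gamma$ of Section~2) transports this data equivariantly. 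I would need to check that the set-valued nature of this data collapses to an honest $\mathcal P(U\ba N)$-valued map and that $N$-equivariance of $\mu$ under $\Delta(N)$ descends correctly to $m^{\BR}$-equivariance on the $X_2$ factor. Proposition~\ref{lem:Borel-map-var} then yields $U=N$ and the map constant, contradicting the presence of a proper singular part (or forcing the support into a single structure that is incompatible with an infinite conservative measure). This reduction from fiberwise Ratner data to an equivariant map into $\mathcal P(U\ba N)$ is the delicate step.

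For $k=2$, the conclusion is essentially handed to us: Theorem~\ref{pro}(2) already asserts that $\mu$ is the graph of the $\BR$-measure in the sense of Theorem~\ref{main}(2), so I would only need to unwind the identification of the finite-fiber structure with the pushforward $\phi_*m^{\BR}_{(g_0^{-1}\Gamma_1 g_0\cap\Gamma_2)}$ via the stated isomorphism $\phi$, verifying that the almost-sure constant fiber cardinality and $\Delta(N)$-invariance pin down the commensurability element $g_0$ with $[\Gamma_2:g_0^{-1}\Gamma_1 g_0\cap\Gamma_2]<\infty$. This is a bookkeeping argument rather than a new idea.

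The case $k=3$ ($\field=\c$, $\pi_*\mu=\op{d}\!n$ supported on a closed orbit $x_2N\cong\br\times\mathbb S^1$) is where I expect the real geometric work. Here $\mu$ lives on $X_1\times x_2N$, and I would again disintegrate over $x_2N$, writing $\mu=\int_{x_2N}\mu_x\,\op{d}\!x$ with each $\mu_x$ a probability measure on $X_1$. Let $U<N$ be the one-dimensional subgroup containing $\op{Stab}_N(x_2)$, so that $U$ acts on the closed orbit along the compact $\mathbb S^1$-direction while the complementary direction is the $\br$-factor. The conservativity of $\mu$ along $\Delta(N)$ and invariance under the stabilizer direction should force $\mu_x$ to be $U$-invariant for a.e.\ $x$. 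Applying Ratner's Theorem~\ref{ratner} to this one-parameter unipotent $U$ splits the analysis into two subcases matching Theorem~\ref{main}(3): either the fiber measures are Haar, giving $\mu=m^{\Haar}\times\op{d}\!n$ (subcase (a)), or they are supported on compact orbits of a conjugate of $L\simeq\SL_2(\br)$ with $L\cap N=U$, giving subcase (b) with the explicit integral formula $\mu_{x_2n_0}(\psi)=\int_Y\psi(ynn_0,x_2n_0)\,\op{d}\!y$. The obstacle in subcase (b) is to show the $L$-orbit $Y$ and the translation element $n\in N$ are rigid along the $\br$-direction of $x_2N$ — i.e.\ that $\Delta(N)$-invariance propagates a single compact $L$-orbit consistently across the noncompact factor rather than allowing the data to vary measurably — and here too I anticipate leaning on Proposition~\ref{lem:Borel-map-var}, or a variant of its equivariant-map argument restricted to the closed orbit, to eliminate measurable dependence and force the clean form stated.
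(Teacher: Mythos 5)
Your handling of $k=2$ and $k=1$ is essentially the paper's argument. For $k=1$ the "delicate step" you flag is resolved exactly as you guess: there are only countably many compact $L$-orbits, so if the Haar component $c_x$ vanishes a.e., $\Delta(N)$-ergodicity confines $\mu$ to a single $y_0LN\times X_2$; the $U$-ergodic probability measures supported in $y_0LN$ are precisely the translates $\eta_n$, $n\in U\ba N$, of the $L$-invariant measure on $y_0L$, and writing $\mu_x=\int_{U\ba N}\eta_n\operatorname{d}\!\sigma_x(n)$ turns $\mu_{xn_0}=n_0\mu_x$ into $n_0\sigma_x=\sigma_{xn_0}$, i.e.\ an essentially $N$-equivariant Borel map $(X_2,\brg)\to\mathcal P(U\ba N)$, which Proposition~\ref{lem:Borel-map-var} forbids when $U$ is one-dimensional. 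So the singular part is ruled out and $\mu=m^{\Haar}\times\brg$.

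The case $k=3$ is where your plan has genuine gaps. First, $\operatorname{Stab}_N(x_2)$ is the discrete group $\langle u\rangle\cong\bbz$, not the connected group $U$: for $v\in U\setminus\langle u\rangle$, $\Delta(v)$-invariance only gives $\mu_{xv}=v_*\mu_x$, so the fibers are a priori invariant only under the single unipotent element $u$. Your asserted $U$-invariance of $\mu_x$ does not follow from conservativity; one needs the extension of Ratner's theorem to measures invariant under one unipotent element (Shah), which the paper invokes via the remark after Theorem~\ref{ratner}. Second, and more seriously, Proposition~\ref{lem:Borel-map-var} is the wrong tool for the singular subcase here. Its proof rests on the transverse recurrence of $m^{\BMS}$ (Lemma~\ref{lem:recurrence}), hence on the radial limit set structure of $(X,m^{\BR})$; on the closed orbit $x_2N\cong\br\times\mathbb S^1$ the base measure $\operatorname{d}\!n$ is dissipative transverse to $U$, so no analogue holds. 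Worse, if a variant did hold it would prove too much: in case (3)(b) the assignment $x\mapsto\sigma_x$ \emph{is} a nonconstant $N$-equivariant map into $\mathcal P(U\ba N)$ with $U\neq N$ (indeed $\sigma_{x_2n_0}=\delta_{nn_0}$), so your argument would erroneously eliminate a measure that genuinely occurs in the classification rather than establish its rigidity. The correct mechanism is more elementary: the equivariance $n_*\sigma_x=\sigma_{xn}$ lets one write $\mu=\int_{U\ba N}\nu_n\operatorname{d}\!\sigma(n)$, where each $\nu_n(\psi)=\int_{x_2N}\int_Y\psi(yn_0n,x_2n_0)\operatorname{d}\!y\operatorname{d}\!n_0$ is already $\Delta(N)$-invariant, and $\Delta(N)$-ergodicity of $\mu$ then forces $\sigma$ to be a Dirac mass at some $n\in U\ba N$, which is exactly Theorem~\ref{main}(3)(b).
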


\begin{proof} Observe first that the case of $k=2$ follows directly from Theorem~\ref{pro}.

Consider the case $k=1$: suppose that $\mu$ is invariant under a subgroup $U\times \{e\}$ for a non-trivial
connected subgroup $U$ of $N$. We normalize $\brg$ so that $\pi_*\mu=\brg$.
It follows from the standard disintegration theorem that
\be\label{eq:mu-disint}
\mu=\int_{X_2} \mu_x \operatorname{d}\!\brg (x).
\ee
{Arguing as in \S\ref{sec:sl2r}, since $\mu$
is invariant under $U\times \{e\},$ we get that  
$\mu_x$ is a $U$-invariant probability measure on $X_1$ for $\brg$-a.e.\ $x$.}
We claim that
\be\label{fin} 
\mu_x=m^{\Haar}\quad \text{ for $\brg$-a.e.\ $x$;}
\ee
this implies $\mu=m^{\Haar}\times \brg$ and finishes the proof in this case. 

We apply Theorem \ref{ratner} to $U$.
Let $L\simeq \PSL_2(\br)$ be defined as in Theorem \ref{ratner}.
Compactness of $\G_1\ba \G_1 g L$ implies that
$g^{-1}\Gamma_1 g\cap L$ is a cocompact lattice of $L$. 
In particular, $g^{-1}\Gamma_1 g\cap L$ is finitely generated and Zariski dense in $L$. 
This implies there are only countably many compact $L$ orbits in $X_1$.

 Let
$\{y_iL : i=0, 1, 2, \ldots \}$ be  the collection of all compact $L$-orbits in $X_1.$ 
Then for $\brg$-a.e.\ $x\in X_2$, we have
\be\label{eq:erg-dec-ratner}
\mu_x=c_x m^{\Haar}+ \sum_i  \mu_{x, i} 
\ee
where $c_x \ge 0$ and $\mu_{x, i}$ is a $U$-invariant finite measure supported in $y_i L N$. 

The set $\{(x_1,x_2): c_{x_2}>0\}$ is a $\Delta(N)$-invariant Borel measurable set. 
Therefore,~\eqref{fin} follows if this set has positive measure.

In view of this, we assume from now that $c_x=0$ for $\brg$-a.e.\ $x$. 
Then
the support of $\mu$ is contained in a countable union 
\[ 
\bigcup_i (y_i L N\times X_2).
\]
Hence for some $i$, 
\be \label{un} 
\mu(y_i LN\times X_2)>0 .
\ee
Without loss of generality, we may assume $i=0$.

Since $y_0 LN\times X_2$ is $\Delta(N)$-invariant and 
$\mu$ is $\Delta(N)$-ergodic,~\eqref{un} implies that $y_0 LN\times X_2$
is $\mu$-conull. Therefore, $\mu_x$ is supported on $y_0LN$ for $\brg$-a.e.\ $x\in X_2$.

For each $n\in N$, let $\eta_n$ be the probability measure supported on $y_0Ln$, invariant under
$n^{-1}Ln$.   Noting that $y_0Ln=y_0Ln'$ if $n\in Un'$,
the map $n\mapsto \eta_n$ factors through $U\ba N$. We also have
\be\label{eta0}  n_0\eta_n=\eta_{nn_0}\quad\text{for any $n, n_0\in N$}.\ee
By Theorem \ref{ratner}, the collection $\{\eta_n: n\in U\ba N\}$
 provides all $U$-invariant ergodic probability measures
on $X_1$ whose supports are contained in $y_0LN$.

Hence the $U$-ergodic decomposition of $\mu_x$ gives that  for a.e. $x\in X_2$,
there is a probability measure $\sigma_x$ on $U\ba N$ such that
\[
\mu_x=\int_{U\ba N}\eta_{n} \operatorname{d}\!\sigma_x(n).
\]

Since $\mu$ is $\Delta(N)$-invariant, we have \be\label{eq:N-mu-x}
\mu_{xn_0}=n_0\mu_x\quad\text{ for $\brg$-a.e. $x\in X_2$ and all $n_0\in N.$ }
\ee

Observe that
\be 
\label{u1} \mu_{xn_0}=\int_{U\ba N }\eta_{n} \operatorname{d}\!\sigma_{xn_0}(n),
\ee 
and
that 
\begin{align*}  
n_0\mu_x&=\int_{X_1} n_0 \eta_n\operatorname{d}\!\sigma_x(n)
=\int_{X_1}  \eta_{nn_0}\operatorname{d}\!\sigma_x(n) =\int_{X_1}  \eta_{n}\operatorname{d}(n_0\sigma_x)(n).
\end{align*}
Therefore \eqref{eq:N-mu-x} implies that for $\brg$-a.e. $x\in X_2$ and for a.e. $n_0\in N$,
\be\label{ns} 
n_0\sigma_x=\sigma_{xn_0}.
\ee 

It follows that the Borel map $f:(X_2,\brg)\to \mathcal P(U\ba N)$ defined by 
\[
f(x):=\sigma_x
\]
is essentially $N$-equivariant for the natural action of $N$ on $\mathcal P(U\ba N)$.
 
As $U$ is one dimensional, this yields  a contradiction to Proposition \ref{lem:Borel-map-var} 
and hence completes
the proof of case $k=1$.

We now turn to the proof of the case $k=3$. The argument is similar to the above case.
Let $x_2N$ be a closed orbit as in the statement of Theorem \ref{pro}(3).
We disintegrate $\mu$ as follows:
\be\label{eq:dis-int}
\mu=\int_{x_2N} \mu_x \operatorname{d}\!n
\ee
where $\mu_x$ is a probability measure on $X_1$ for a.e.\ $x\in x_2N$.
As $x_2N$ is homeomorphic to $\br \times \mathbb S^1$, the stabilizer of $x_2$ in $N$ is  generated by
a unipotent element, say, $u$.
Note that $u$ acts trivially on $x_2N$ and $\Delta(u)$ leaves $\mu$ invariant.
Hence again we have 
\be\label{eq:mux-inv}
\mbox{$\mu_x$ is $u$-invariant almost surely. }
\ee

We apply \eqref{eq:erg-dec-ratner} for $u$-invariant measures $\mu_x$. Let $L\simeq \PSL_2(\br)$ 
denoted the connected closed subgroup
containing $u$ and $u^t$ and let $\{y_iL:i=0,1,\ldots \}$ be the collection of all compact $L$-orbits.
Then for almost every $x \in x_2N$ we write
\[
\mu_x=c_x m^{\Haar}+ \sum_i  \mu_{x, i},
\]
where $\mu_{x, i}$ is a $u$-invariant finite measure supported in $y_i L N$.
As before, if $c_x>0$ on a positive measure subset of $x_2N$, 
then $c_x=1$ almost surely by the $\dN$ ergodicity of $\mu.$
Then $\mu=m^{\Haar}\times dn$; note that this measure is $\dN$
ergodic since $m^{\Haar}$ is $u$-ergodic.   This is the case of Theorem~\ref{main}(3)(a).

Lastly we consider the case when $c_x=0$ almost surely. As before,  
\[
\mu(y_i LN\times x_2N)>0
\] 
for some $i$, and hence almost all $\mu_x$ is supported on one $y_iLN$ by the ergodicity
of $\mu$. We assume $i=0$ without loss of generality. 

Set $U=L\cap N$.
Then $\{\eta_n: n\in U\ba N\}$ (with $\eta_n$ defined as in the previous case)
is the set of all $u$-ergodic probability measures on $X_1$ whose
supports are contained in $y_0LN$  by Theorem \ref{ratner}
and the remark following it. 
Therefore, we get a probability measure $\sigma_x\in \mathcal P(U\ba N)$ such that
\[
\mu_x=\int_{n\in U\ba N} \eta_{n} \operatorname{d}\!\sigma_x(n).
\]
Moreover, $n_*\sigma_x=\sigma_{xn}$ for a.e.\ $x$ and all $n\in N$.

Put $\sigma:=\sigma_x$ for some fixed $x$. Without loss of generality, we assume $x=x_2$.
Then for $\psi\in C_c(Z)$,
$$\mu(\psi) =\int_{n\in U\ba N} \int_{x_2n_0 \in x_2N} \int_Y \psi(yn_0n, x_2n_0) dy \operatorname{d}\!n_0\operatorname{d}\!\sigma(n).$$ 
 
 However for each $n\in U\ba N$,
  $\psi \mapsto\int_{x_2n_0 \in x_2N} \int_Y \psi(yn_0n, x_2n_0) dy \operatorname{d}\!n_0$ defines  a $\Delta(N)$-invariant
measure, and hence by the $\Delta(N)$-ergodicity assumption on $\mu$, $\sigma$ must be a delta measure at a point, say $n\in U\ba N$.
Therefore  we arrive at Theorem \ref{main}(3)(b). 
\end{proof}





\subsection{Proof of Theorem \ref{ergg}}\label{dedd}
 Suppose that  the product measure 
 \[
 \mu:=m^{\Haar}\times \brg
 \] 
 is not ergodic for the action of $\dN$. Let $\Omega$ be the support of $\mu$. We consider the decomposition $\Omega=\Omega_d\cup \Omega_c$
where $\Omega_d$ and $\Omega_c$ are maximal $\dN$-invariant dissipative and conservative
subsets respectively. That is, for any positive measure $S\subset \Omega_d$ (resp.\ $S\subset \Omega_c$),
the Haar measure of $\{n\in N: xn\in S\}$ is finite (resp.\ infinite) for almost all $s\in S$ (see~\cite{Kr}).

Consider the ergodic decomposition of $\mu$.
By Theorem \ref{main}, any ergodic conservative component
in the ergodic decomposition of $\mu$ should be
one of the measures as described in Theorem \ref{main}(2) and \ref{main}(3). 

Now $\mu $ gives measure zero to sets of the form 
\[
(x_1,x_2)\Delta(G)(N\times\{e\})
\]
where $(x_1,x_2)\Delta(G)$ is a closed orbit. 
Moreover, there are only countably many closed $\Delta(G)$ orbits in $Z$.

Also, any closed $N$ orbit $x_2N$ gives rise to the family $x_2NA$ of closed $N$-orbits  where $A$ is
the diagonal subgroup. There are only finitely many such $AN$-orbits in $X_2$, as $\Gamma_2$ is geometrically finite
and hence there are only finitely many $\Gamma$ orbits of parabolic limit points. Therefore $\brg$
gives zero measure to the set of all closed $N$-orbits in $X_2$.
 
It follows that  $\Omega_c$ is trivial and hence the product measure $m^{\Haar}\times \brg$ is 
completely dissipative. This is a contradiction since $X_1$ is compact and $\brg$ is $N$-conservative.
This proves Theorem \ref{ergg}.
\qed

\end{document}